\documentclass[10pt]{amsart}
\usepackage{amsmath,amssymb,latexsym,soul,cite,mathrsfs,amsthm}

\usepackage{color,enumitem,graphicx}
\usepackage[colorlinks=true,urlcolor=blue,
citecolor=red,linkcolor=blue,linktocpage,pdfpagelabels,
bookmarksnumbered,bookmarksopen]{hyperref}
\usepackage[english]{babel}

\usepackage[left=2.9cm,right=2.9cm,top=2.8cm,bottom=2.8cm]{geometry}
\usepackage[hyperpageref]{backref}

\usepackage[colorinlistoftodos]{todonotes}
\makeatletter
\providecommand\@dotsep{5}
\def\listtodoname{List of Todos}
\def\listoftodos{\@starttoc{tdo}\listtodoname}
\makeatother

\numberwithin{equation}{section}
%\pagestyle{myheadings}
% \markboth{}{} \pretolerance=10000
%\def\lb{\lambda}
%\def\var{\varepsilon}
%\def\pil{\left<}
%\def\pir{\right>}

%\def\nd{\noindent}
%\def\thend{\rule{3mm}{3mm}}
%\def\mathbb Re{\mathbb{R}}
\def\cal{\mathcal}
\newtheorem{lemma}{Lemma}
\newtheorem{proposition}{Proposition}
\newtheorem{theorem}{Theorem}
\newtheorem*{theorem*}{Theorem}

\newtheorem{remark}{Remark}

\def\cal{\mathcal}
\def\H{H^1_0(\Omega)}

\title[positive solutions for  the fractional laplacian ]
{positive solutions for  the fractional laplacian  in the almost critical case
in a bounded domain}

\author[G. M. Figueiredo]{Giovany M. Figueiredo}
\author[G. Siciliano]{Gaetano Siciliano}

\address[G. M. Figueiredo]{\newline\indent Faculdade de Matem\'atica
\newline\indent 
Universidade Federal do Par\'a
\newline\indent
66075-110, Bel\'em - PA, Brazil}
\email{\href{mailto:giovany@ufpa.br}{giovany@ufpa.br}}

\address[G. Siciliano]{\newline\indent Departamento de Matem\'atica
\newline\indent 
 Universidade de S\~ao Paulo 
\newline\indent 
Rua do Mat\~ao 1010,  05508-090 S\~ao Paulo, SP, Brazil }
\email{\href{mailto:sicilian@ime.usp.br}{sicilian@ime.usp.br}}

\thanks{Giovany M. Figueiredo was partially
supported by  CNPq, Brazil. Gaetano Siciliano  was partially supported by
Fapesp and CNPq, Brazil. }
\subjclass[2010]{35A15, 55M30, 58E05}
%\date{\today}
\keywords{Fractional Laplacian, Variational Methods, Ljusternick-Schnirelmann category, multiplicity of solutions.}

%\author{
%{\bf\large Giovany M. Figueiredo}\hspace{2mm}
%{\bf\large}\vspace{1mm}\\
%{\it\small Universidade Federal do Par\'a, Faculdade de Matem\'atica}\\
%{\it\small 66075-110, Bel\'em - PA, Brazil }\\
%{\it\small e-mail: giovany@ufpa.br}
%\\ \\
%{\bf\large Gaetano Siciliano}\hspace{2mm}
%{\bf\large}\vspace{1mm}\\
%{\it\small Universidade de S\~ao Paulo - USP, Departamento de  Matem\'atica e Estat\'istica }\\
%{\it\small 05508-090, S\~ao Paulo - SP, Brazil }\\
%{\it\small e-mail: sicilian@ime.usp.br} }
%
%\title{On a  multiplicity result via Morse theory for a problem with fractional Laplace in $\mathbb R^{N}$
%\thanks{Giovany M. Figueiredo was partially
%supported by  CNPq/Brazil . Gaetano Siciliano  was partially supported by
%Fapesp and CNPq, Brazil. }}
%\date{}

%\maketitle{}

\pretolerance10000

\begin{document}

\maketitle

\begin{abstract}
We prove existence of multiple   positive solutions for a {\sl fractional scalar field equation}
in a bounded domain, whenever $p$ tends to the critical Sobolev exponent.
By means of the ``photography method'', we prove that
the topology of the domain  furnishes a lower bound
on the number of positive solutions.

\end{abstract}

\maketitle

%********************************************************************************
\section{Introduction}
%********************************************************************************

In  the celebrated papers \cite{BC,BC2} Benci, Cerami and Passaseo proved an existence result 
of positive solutions of the following problem
\begin{equation}
\label{model}\left\{
\begin{array}
[c]{ll}
-\Delta u +u=|u|^{p-2}u & \quad\text{ in } \Omega\\
u=0 & \quad\text{ on } \partial\Omega
\end{array}
\right.
\end{equation}
where $\Omega\subset\mathbb{R}^{N}$ is a smooth and bounded domain, $N\ge3$
and $p<2^{*}=\frac{2N}{N-2}$, the critical Sobolev exponent of the embedding
of $\H$ in the Lebesgue spaces. %$L^p(\Omega)$. In particular
%they ask how the number of positive solutions depends on the topology of
%$\Omega$. The core of their results is 
Roughly speaking they show that (among other results), for $p$ near $2^{*}$, the number of positive solutions
is bounded below by a topological invariant associated to $\Omega$.
%then there are as many solutions as soon as the nonlinearity acts strongly on the
%equation. For problem \eqref{model} this happens when $p$ is near $2^{*}$;
%indeed they prove the following result
More specifically they prove the following
\begin{theorem*}
\label{ThA} There exists a $\bar{p}\in(2,2^{*})$ such that for every
$p\in[\bar p, 2^{*})$ problem \eqref{model} has (at least) $\emph{cat}_{\bar\Omega}\,(\bar\Omega)$
positive solutions. Even more, if $\Omega$ is not contractible in itself, the number of solutions is
$\emph{cat}_{\bar\Omega}\,(\bar\Omega)+1.$
\end{theorem*}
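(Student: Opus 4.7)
The plan is to apply Lusternik--Schnirelmann category theory on the Nehari manifold of the associated functional, following the photography (or ``barycenter'') method. Let $I_p(u)=\tfrac12\|u\|_{H^1_0}^2-\tfrac1p\int_\Omega |u|^p$ and set $\mathcal N_p=\{u\in H^1_0(\Omega)\setminus\{0\}:\langle I'_p(u),u\rangle=0\}$, with $c_p=\inf_{\mathcal N_p}I_p$. Since $p<2^*$, the embedding $H^1_0(\Omega)\hookrightarrow L^p(\Omega)$ is compact, so $I_p|_{\mathcal N_p}$ satisfies $(PS)_c$ at every level $c$ strictly below the first ``bubble threshold'' $c^\star:=\tfrac1N S^{N/2}$, where $S$ is the best Sobolev constant. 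A standard concentration-compactness argument à la Struwe identifies exactly how compactness can fail: only by escape of a Talenti bubble carrying energy $c^\star$. One moreover shows $c_p\to c^\star$ as $p\to 2^*$.

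Next I would construct the two maps that make the topology of $\Omega$ visible at the energy level $c_p$. Fix a radial Talenti-type instanton $U$, and for $y\in\Omega$ and small $\varepsilon>0$ define a truncated, rescaled bump $\psi_{\varepsilon,y}(x)=\eta(x-y)\,\varepsilon^{-(N-2)/2}U((x-y)/\varepsilon)$ with $\eta$ a cut-off supported in $\Omega$. Projecting $\psi_{\varepsilon,y}$ onto $\mathcal N_p$ produces the \emph{photography map} $\Phi_p:\Omega\to\mathcal N_p$ and a direct computation, using $c_p\to c^\star$, gives $I_p(\Phi_p(y))\le c_p+\theta_p$ uniformly in $y$, with $\theta_p\to 0$ as $p\to 2^*$. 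On the other hand, define the \emph{barycenter}
\[
\beta(u)=\frac{\int_\Omega x\,|u|^p\,dx}{\int_\Omega |u|^p\,dx}.
\]
The key estimate is that if $I_p(u)\le c_p+\theta_p$ then $\beta(u)$ lies in a small neighborhood $\Omega^+_\delta$ of $\bar\Omega$; otherwise $u$ would have to ``split'' into bubbles and violate the energy bound. Consequently $\beta\circ\Phi_p$ is homotopic to the inclusion $\Omega\hookrightarrow\Omega^+_\delta$.

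From this homotopy I would deduce $\mathrm{cat}\bigl(I_p^{c_p+\theta_p}\cap\mathcal N_p\bigr)\ge \mathrm{cat}_{\bar\Omega}(\bar\Omega)$, and since $I_p|_{\mathcal N_p}$ satisfies $(PS)$ strictly below $c^\star$ (hence on this sublevel for $p$ sufficiently close to $2^*$), the classical Lusternik--Schnirelmann theorem yields at least $\mathrm{cat}_{\bar\Omega}(\bar\Omega)$ critical points of $I_p|_{\mathcal N_p}$, which by a positivity/truncation argument give positive solutions of \eqref{model}.

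For the final ``$+1$'' when $\Omega$ is not contractible, the idea is to detect a higher-energy solution by a min--max over an appropriate family. Because $\Phi_p(\Omega)$ is not contractible in the sublevel set (otherwise $\bar\Omega$ would be, via $\beta$), one can construct a cone-type extension $\widetilde\Phi_p:C\Omega\to\mathcal N_p$ of $\Phi_p$ whose maximum energy is strictly between $c_p+\theta_p$ and $c^\star$; linking with the sublevel set produces the extra critical point by the mountain-pass/linking theorem, still within the compactness range. The main obstacle I expect is the uniform compactness/energy bookkeeping in the second paragraph: one must simultaneously quantify the gap $c^\star-c_p$, the error $\theta_p$, and the size of $\Omega^+_\delta$ so that everything fits as $p\uparrow 2^*$; this is where the subtle concentration-compactness analysis of Struwe-type decompositions must be carried out with uniformity in $p$.
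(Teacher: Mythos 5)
Your outline follows essentially the same route that the paper implements for the fractional analogue (and that it says adapts to $s=1$): Nehari manifold, convergence of the ground-state levels $c_p$ to $\tfrac1N S^{N/2}$, a photography map, the barycenter, Lusternik--Schnirelmann category, and a cone construction for the extra solution when $\Omega$ is not contractible. The one point that is a genuine logical problem as written is your compactness bookkeeping. For fixed subcritical $p$ the embedding $H^1_0(\Omega)\hookrightarrow L^p(\Omega)$ is compact and $I_p|_{\mathcal{N}_p}$ satisfies the PS condition at \emph{every} level; the threshold $\tfrac1N S^{N/2}$ plays no role in the PS condition for the subcritical functional (it only enters when you pass to the limit functional $I_*$ to analyze almost-minimizing sequences as $p\to 2^*$). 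This matters for your ``$+1$'' step: the projected cone $\widetilde\Phi_p(C\Omega)\subset\mathcal{N}_p$ has maximum energy $M_p$ which there is no reason to place below $\tfrac1N S^{N/2}$ (the vertex $z$ is an arbitrary point of $\mathcal{N}_p$ outside the photographed set), so your requirement that the min--max value stay ``within the compactness range'' cannot in general be met and is not needed. The paper's argument simply takes $M_p=\max_{\hat{\mathcal{C}}}I_p$ and invokes PS at all levels to produce a critical value in $(m_{p,r},M_p]$; with that correction your argument closes.

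Two smaller discrepancies are worth noting. First, the cutoff $\eta(\cdot-y)$ cannot be supported in $\Omega$ uniformly for $y$ near $\partial\Omega$, so the photography map must be defined on the retract $\Omega^-_\delta$ (homotopically equivalent to $\bar\Omega$ for small $\delta$), not on all of $\Omega$; the paper instead uses the radial ground state of the problem on a small ball $B_r(y)$, which gives $I_p(\Psi_{p,r}(y))=m_{p,r}$ exactly and reduces your uniform-in-$p$ bookkeeping (the choice $\varepsilon=\varepsilon(p)$ and the expansion of $\|\psi_{\varepsilon,y}\|^2/|\psi_{\varepsilon,y}|_{2^*}^2$) to the single statement that $m_{p,r}\to \tfrac1N S^{N/2}$, i.e.\ the level-convergence proposition applied to the ball as well as to $\Omega$. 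Second, the heuristic behind the barycenter estimate is not that ``splitting into bubbles violates the energy bound'': since $\beta(u)$ always lies in the convex hull of $\Omega$, the content of the estimate is that an almost-minimizing Nehari sequence with $p_n\to 2^*$ must, after projection onto $\mathcal{N}_*$ and an Ekeland argument, concentrate as a \emph{single} bubble centered at points $x_n\in\Omega$ (Struwe-type decomposition for the limit functional), whence $\beta(w_n)=x_n+o(1)$ falls into $\Omega^+_\delta$; without concentration the barycenter is only constrained to the convex hull, which for nonconvex $\Omega$ is not contained in $\Omega^+_\delta$.
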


Hereafter given a topological pair $A\subset X, \textrm{cat}_{X}(A)$ is the Ljusternik-Schnirelmann category of the set $A$ in $X$ (see e.g. \cite{J}).

To prove this result, the authors used variational methods: an energy functional related to the problem
is introduced in such a  way that the solutions are seen as critical point of this functional restricted to  $L^{p}-$ball.
Then the ``photography method''
(which permits to see a photography of the domain $\Omega$ in a suitable sublevel of the   functional) is implemented
in order to prove the existence of many critical points by means of the classical  Ljusternick-Schnirelmann Theory.

\bigskip

The aim of this paper is to prove the fractional counterpart of  the above Theorem. Indeed,
due to the large literature appearing in these last years on fractional operators,
 it is very natural to ask if 
a similar result also holds for the fractional laplacian.
 In other words we consider in this paper
the following nonlocal problem
\begin{equation}\label{eq:FBC}
\left\{
\begin{array}
[c]{ll}
(- \Delta )^{s}u + u =|u|^{p-2}u & \text{ in } \Omega,\\
u=0 & \text{ on } \mathbb R^{N}\setminus\Omega,
\end{array}
\right.
\end{equation}
where $s\in(0,1)$, %$\Omega$ is a (smooth and) bounded domain in $\mathbb{R}^{N}, N>2s$,
$p\in(2,2_{s}^{*})$ with $2_{s}^{*}:=2N/(N-2s), N>2s$.

 The operator $(-\Delta)^{s}$ is the {\sl fractional Laplacian}
which is  defined by
$$
(-\Delta)^{s}u (x) := C(N,s)\lim_{ \varepsilon\to 0^{+}} \int_{\mathbb R^{N}\setminus B_{\varepsilon}(x)}
\frac{u(x)-u(y)}{|x-y|^{N+2s}}dy\,,  \qquad x\in \mathbb R^{N}.$$
for a suitable constant $C(N,s)>0$ whose exact value is not really important for our purpose.
The Dirichlet condition in \eqref{eq:FBC} is then given on $\mathbb R^{N}\setminus\Omega$ reflecting the fact that
 $(-\Delta)^{s}$ is a nonlocal operator.

Before to state our result, let us introduce few basic notations.
For a measurable function $u:\mathbb R^{N}\to \mathbb R$ let 
\begin{equation*}\label{eq:gagliardo}
[u]^{2}_{D^{s,2}(\mathbb R^{N})}:=\int_{\mathbb R^{2N}} \frac{|u(x)-u(y)|^{2}}{|x-y|^{N+2s}} dxdy
\end{equation*}
be  the (squared) {\sl Gagliardo seminorm} of $u$. Let us define the Hilbert space
 $$D^{s,2}(\mathbb R^{N})=\left\{u\in L^{2_{s}^{*}}(\mathbb R^{N}): [u]^{2}_{D^{s,2}(\mathbb R^{N})}<+\infty\right\},$$
which is continuously embedded into $L^{2_{s}^{*}}(\mathbb R^{N})$. Let finally
 $$D_{0}^{s,2}(\Omega)=\left\{u\in D^{s,2}(\mathbb R^{N}): u\equiv 0 \text{ in } \ \mathbb R^{N}\setminus \Omega\right\}.$$

Form now on it will be convenient to
adopt the following convention: functions defined in a subset of $\mathbb R^{N}$, let us say $A$,  will be thought extended by zero
on $\mathbb R^{N}\setminus A$, whenever regarded as functions defined on the whole $\mathbb R^{N}$.

Note that being $\partial\Omega$ smooth, $D_{0}^{s,2}(\Omega)$ can be also defined as the completion of
$C^{\infty}_{0}(\Omega)$ under the norm $[\cdot]_{D^{s,2}(\mathbb R^{N})}.$
% see Brasco-Squassina-Yang
Moreover, %$\Omega$ Lipschitz and bounded, 
it is $D^{s,2}_{0}(\Omega) = \{u\in H^{s}(\mathbb R^{N}): u\equiv 0 \text{ on }
\mathbb R^{N}\setminus\Omega\}$. %(see e.g. Fall, Weth, JFA 2012 page 2206)

Recall that we have the continuous embedding $D^{s,2}_{0}(\Omega) \hookrightarrow L^{p}(\Omega)$
for $1\leq p\leq 2_{s}^{*}$ and that the embedding is compact for $1\leq p< 2^{*}_{s}.$

%It is also useful \textcolor{red}{(talvez nao precise)} to introduce the  (squared) {\sl Gagliardo localized seminorm}
%$$[u]^{2}_{D^{s,2}(\Omega)}:=\int_{\Omega\times \Omega} \frac{|u(x)-u(y)|^{2}}{|x-y|^{N+2s}} dxdy.$$
%

\medskip

We then say that $u\in D_{0}^{s,2}(\Omega)$ is  a solution (in the distributional sense)
of \eqref{eq:FBC} if
\begin{equation}\label{eq:solufrac}
\forall \,v\in D_{0}^{s,2}(\Omega) :\quad \int_{\mathbb R^{2N}} \frac{(u(x)-u(y)) (v(x) - v(y))}{|x-y|^{N+2s}}dxdy+\int_{\mathbb R^{N}}u v dx=\int_{\mathbb R^{N}} |u|^{p-2}uv dx.
\end{equation}

The main result of the paper gives a positive answer on the possibility of extending the Benci, Cerami and Passaseo 
result to the fractional case.
\begin{theorem}\label{th:th}
For $s\in(0,1),N>2s$,
there exists a $\bar{p}\in(2,2_{s}^{*})$ such that for every
$p\in[\bar p, 2_{s}^{*})$ problem \eqref{eq:FBC} possesses (at least)
$\emph{cat}_{\bar\Omega}\,(\bar\Omega)$ positive solutions.
Whenever $\Omega$ is not contractible in itself,  the number of solutions is 
$\emph{cat}_{\bar\Omega}\,(\bar\Omega)+1.$
\end{theorem}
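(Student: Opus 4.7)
The plan is to adapt the variational scheme of Benci--Cerami--Passaseo to the nonlocal setting. On $D^{s,2}_0(\Omega)$ I would introduce the quadratic functional
$$
J_p(u) := [u]^{2}_{D^{s,2}(\mathbb R^{N})} + \int_{\mathbb R^{N}} u^{2}\, dx,
$$
restricted to the $L^{p}$-sphere $\Sigma_p(\Omega) := \{u \in D^{s,2}_0(\Omega) : \|u\|_p = 1\}$. A Lagrange-multiplier argument shows that every critical point $u$ of $J_p|_{\Sigma_p(\Omega)}$ yields, after the rescaling $u \mapsto J_p(u)^{1/(p-2)} u$, a weak solution of \eqref{eq:FBC} in the sense of \eqref{eq:solufrac}. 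Since the pointwise inequality $\bigl||u(x)|-|u(y)|\bigr|\le |u(x)-u(y)|$ gives $J_p(|u|)\le J_p(u)$, attention can be restricted to nonnegative critical points, which the fractional strong maximum principle upgrades to strictly positive solutions.

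The next step is a compactness result for $J_p|_{\Sigma_p(\Omega)}$. Set
$$
m(p,\Omega):=\inf_{\Sigma_p(\Omega)} J_p, \qquad m_\infty(p):=\inf_{\Sigma_p(\mathbb R^{N})} J_p,
$$
and note that $m(p,\Omega)>m_\infty(p)$ for $p<2_s^*$ by translation invariance. I would transfer Struwe's profile decomposition to the fractional constrained setting, proving that every Palais--Smale sequence for $J_p|_{\Sigma_p(\Omega)}$ at a level strictly below the threshold $2^{(p-2)/p}\,m_\infty(p)$ is relatively compact. Heuristically, a noncompact PS sequence splits into its weak limit plus one or more concentrating bubbles, each contributing a quantum of $L^{p}$-mass with energy at least $m_\infty(p)$; two or more bubbles would therefore exceed the threshold.

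Next I deploy the \emph{photography method}. Let $U$ be the positive, radial ground state of the limit problem $(-\Delta)^{s}u+u=u^{p-1}$ on $\mathbb R^{N}$, whose existence, regularity, and polynomial decay are by now classical in the fractional literature. For $y\in\bar\Omega$ and a cutoff $\zeta$ supported in a fixed small ball contained in $\Omega - y$ for every $y$, define
$$
\phi_p(y)(x) := \frac{\zeta(x-y)\,U(x-y)}{\|\zeta(\,\cdot\,-y)\,U(\,\cdot\,-y)\|_p},
$$
so that $\phi_p(y)\in\Sigma_p(\Omega)$. Routine truncation estimates together with the decay of $U$ give $J_p(\phi_p(y))<2^{(p-2)/p}\,m_\infty(p)$ uniformly in $y\in\bar\Omega$, provided $p$ is close enough to $2_s^*$. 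On the dual side, introduce the barycenter
$$
\beta(u):=\int_{\mathbb R^{N}} x\,|u(x)|^p\,dx,
$$
and use a concentration-compactness argument to show that, on the sublevel in question, $\beta$ takes values in a fixed neighborhood $\bar\Omega_\rho$ of $\bar\Omega$ and that $\beta\circ\phi_p$ is homotopic to the inclusion $\bar\Omega\hookrightarrow \bar\Omega_\rho$. A standard category comparison then gives $\cat_{\bar\Omega_\rho}(\phi_p(\bar\Omega))\ge \cat_{\bar\Omega}(\bar\Omega)$, and the Ljusternik--Schnirelmann theorem together with the compactness established above produces at least $\cat_{\bar\Omega}(\bar\Omega)$ positive solutions of \eqref{eq:FBC}.

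When $\bar\Omega$ is not contractible in itself, an additional critical point at a strictly higher level is produced by the classical argument of Benci--Cerami--Passaseo: were the whole sublevel $\{J_p<2^{(p-2)/p}m_\infty(p)\}$ to deformation-retract onto $\phi_p(\bar\Omega)$, then $\bar\Omega$ would be contractible, a contradiction; hence an additional critical value exists in the gap, giving the $(\cat_{\bar\Omega}(\bar\Omega)+1)$-th positive solution. The main technical obstacle lies in the compactness step: obtaining a sharp Struwe-type profile decomposition for the constrained nonlocal functional with the correct energy threshold, and controlling $m_\infty(p)$ and the limit-problem ground states uniformly as $p\uparrow 2_s^*$, requires a delicate nonlocal bubbling analysis. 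Once this compactness machinery is in place, the remaining topological portion of the proof transfers from the Benci--Cerami--Passaseo scheme with only notational adjustments.
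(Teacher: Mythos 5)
Your overall architecture (constrained functional, photography map, barycenter, Ljusternik--Schnirelmann) matches the paper's, but there is a genuine gap in the analytic core, and it stems from choosing the wrong limit problem and the wrong energy threshold. You model the loss of compactness on the \emph{subcritical} entire-space problem $(-\Delta)^s u + u = u^{p-1}$ in $\mathbb R^N$, with the splitting threshold $2^{(p-2)/p}m_\infty(p)$ coming from translation invariance. That is the framework for expanding domains ($\lambda\to\infty$); it is misplaced here. For a \emph{fixed bounded} $\Omega$ and fixed $p<2_s^*$ the embedding $D^{s,2}_0(\Omega)\hookrightarrow L^p(\Omega)$ is compact, so $J_p|_{\Sigma_p(\Omega)}$ satisfies the Palais--Smale condition at \emph{every} level and no Struwe decomposition is needed for compactness. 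The difficulty in the regime $p\uparrow 2_s^*$ is entirely different: one must show that the near-minimizers concentrate like \emph{critical} Aubin--Talenti bubbles $U_R(\cdot-x)$ with $x\in\Omega$, which is what forces the barycenter into a neighborhood of $\bar\Omega$. This requires (i) identifying $\lim_{p\to 2_s^*} m_p = m_* = \frac{s}{N}S^{N/2s}$ (the paper's Proposition \ref{limitmp}), and (ii) the fractional global-compactness theorem for the critical functional $I_*$ together with the classification of positive entire solutions of $(-\Delta)^s u = u^{2_s^*-2}u$, which pins down the profile of low-energy Palais--Smale sequences as a single bubble centered in $\Omega$ (the paper's Proposition \ref{baricentri}). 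None of this appears in your sketch; the phrase ``use a concentration-compactness argument to show that $\beta$ takes values in $\bar\Omega_\rho$'' is precisely the step that carries all the work.

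Moreover, the threshold you propose for the barycenter control is too generous. The sublevel $\{J_p< 2^{(p-2)/p}m_\infty(p)\}$ corresponds asymptotically to free-functional levels up to $2m_*$, i.e.\ ``below two bubbles''. At such levels a function need not concentrate at all: for a nonconvex domain (an annulus is exactly the interesting case for this theorem) one can have admissible functions spread over $\Omega$ whose barycenter lies in the hole, far from $\bar\Omega_\rho$, so the map $\beta$ does not land where you need it and the homotopy $\beta\circ\phi_p\simeq \mathrm{id}$ breaks down. The sublevel must shrink to the ground-state level as $p\to 2_s^*$ (the paper takes $I_p^{m_{p,r}}$, where $m_{p,r}$ is the ground-state level on a small ball $B_r$, which satisfies $m_p<m_{p,r}<m_p+\varepsilon$ for $p$ near $2_s^*$); the ball ground states also give a truncation-free photography map $\Psi_{p,r}$, avoiding the delicate cutoff estimates your $\zeta(\cdot-y)U(\cdot-y)$ would require against such a tight energy bound. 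Finally, your argument for the extra solution in the non-contractible case is not the standard one and is not correct as stated: one does not derive a contradiction from a retraction of the whole sublevel onto $\phi_p(\bar\Omega)$, but rather shows that $\Psi_{p,r}(\Omega_r^-)$ is contractible in a higher sublevel $I_p^{M_p}$ (via a cone projected back to the Nehari manifold) while it cannot be contractible in $I_p^{m_{p,r}}$ without forcing $\Omega_r^-$ to be contractible in $\Omega_r^+$; this yields a critical value in $(m_{p,r},M_p]$.
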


Beside the case when $p\to 2^{*}$, the papers  \cite{BC,BC2} treat also when a certain parameter $\lambda$ appearing in the equation tends to $+\infty$. We do not enter in details here, but the same type of result is obtained: for large $\lambda$ the domain topology gives a lower bound on the number of positive solutions of the problem.
 However it is readily seen that this
last case can be equivalently reformulated as a problem in an expanding domain, simply by a change of variables
which ``transfer'' the parameter $\lambda$ from the equation to the domain.
In the same spirit, in \cite{BC3} the influence of the domain topology is studied for
semiclassical equations, that is, roughly speaking, when a parameter $\varepsilon$ which appears in the equation
tends to zero.

Subsequently, after the papers  \cite{BC, BC2,BC3}, many
authors have used the same methods 
to prove multiplicity results of solutions (depending on the domain topology)
whenever  $\lambda\to+\infty$, that is for problems in expanding domains, or $\varepsilon\to0$,
that is for semiclassical states.

Nevertheless, to the best of our knowledge, there is only another paper in the literature
dealing with the case in which  the role of parameter is taken by the exponent of the nonlinearity 
which tends to the critical Sobolev exponent:
see \cite{Sicilia} where the Schr\"odinger-Poisson system is studied. 
We think that, even if less explored,  the case in which the
parameter is the exponent of the power nonlinearity is equally interesting;  our goal is then
to give a contribution in this direction.
Observe finally, that  by considering $s=1$, our proof can be adapted to recover
the result of \cite{BC} by using the method of the Nehari manifold, in place of the $L^{p}-$ball
as done in the paper of Benci and Cerami.
\medskip

The paper is organized in the following way.

In Section \ref{sec:Variational} we give the variational setting in which  problem  \eqref{eq:FBC} is settled.
Section \ref{sec:limit} deals  with a related limit problem, which will be usefull in order to prove Theorem \ref{th:th}.
Finally, in Section \ref{sec:prova} after introducing the barycenter map and prove some important properties,
the proof of  Theorem \ref{th:th} is given.

\medskip

Let us finish this section with basic notations that will be used in all the paper.
\subsection*{Notations}
Without loss of generality we assume in all the paper $0\in\Omega$. We denote by
$|\,.\,|_{L^{p}(A)}$ the $L^{p}-$norm of a function defined on the domain $A$.
If the domain is  $\Omega$ or $\mathbb R^{N}$ (it should be clear from the context) 
we will use the notation $|\,.\,|_{p}$. 

We use $B_{r}(y)$ for the closed ball of radius $r>0$ centered in $y$. If $y=0$
we simply write $B_r.$

The letter $c$ will be used indiscriminately
to denote a suitable positive constant whose value may change from line to line 
and we will use $o(1)$ for a quantity which goes to zero.

Other notations will be introduced whenever we need.

\section{the variational setting}\label{sec:Variational}
It is easily seen that a solution in the sense \eqref{eq:solufrac}
 of problem \eqref{eq:FBC}  can be found as  a critical point of the $C^{1}$
 functional
\begin{equation}\label{eq:Ip}
I_{p}(u)=\frac{1}{2}[u]^{2}_{D^{s,2}(\mathbb R^{N})}
%\int_{\mathbb R^{2N}}\frac{|u(x)-u(y)|^{2}}{|x-y|^{N+2s}} dx dy
+\frac12|u|_{2}^{2}
%\int_{\mathbb R^{N}}u^{2} -\frac1p\int_{\mathbb R^{N}}|u|^{p}\,,
-\frac{1}{p}|u|_{p}^{p}
  \qquad u\in D_{0}^{s,2}(\Omega).
\end{equation}

Observe that, for $u\in D^{s,2}_{0}(\Omega)$ we can write equivalently
$$
%\int_{\mathbb R^{2N}}\frac{|u(x)-u(y)|^{2}}{|x-y|^{N+2s}} dx dy
[u]^{2}_{D^{s,2}(\mathbb R^{N})} =
 \int_{\mathbb R^{2N}\setminus(\Omega^{c} \times \Omega^{c})}\frac{|u(x)-u(y)|^{2}}{|x-y|^{N+2s}} dx dy$$
where of course $\Omega^{c}:=\mathbb R^{N}\setminus \Omega$.
We will use the next result
\begin{lemma}\cite[Lemma 6]{SV}
If $u\in D_{0}^{s,2}(\Omega)$ then 
\begin{equation*}
|u|_{L^{2^{*}_{s}}(\Omega)} \leq c \,[u]^{2}_{D^{s,2}(\mathbb R^{N})}
%\int_{\mathbb R^{2N}} \frac{|u(x)-u(y)|^{2}}{|x-y|^{N+2s}} dx dy 
\end{equation*}
for a suitable constant $c>0$. In particular it follows that %(be $\Omega$ bounded)
$$\|u\|^{2}: = [u ]_{D^{s,2}(\mathbb R^{N})}^{2} +|  u |_{2}^{2}$$ 
gives an equivalent (squared) norm on $D_{0}^{s,2}(\Omega)$.
\end{lemma}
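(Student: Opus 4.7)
The plan is to split the statement into two independent steps: first establish the fractional Sobolev embedding $|u|_{L^{2^{*}_{s}}(\Omega)} \leq c\,[u]_{D^{s,2}(\mathbb R^{N})}$ (read with no square on the right, which is the natural scaling), and then deduce the norm equivalence as a quick corollary via the boundedness of $\Omega$ and H\"older's inequality.

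For the embedding step, the central input is the fractional Sobolev inequality on the whole space: for every $v\in D^{s,2}(\mathbb R^{N})$,
$$|v|_{L^{2^{*}_{s}}(\mathbb R^{N})} \leq C(N,s)\,[v]_{D^{s,2}(\mathbb R^{N})}.$$
I would prove this via Fourier methods. By Plancherel, $[v]^{2}_{D^{s,2}(\mathbb R^{N})}$ equals (up to a dimensional constant) $\int_{\mathbb R^{N}} |\xi|^{2s}|\widehat v(\xi)|^{2}d\xi = |(-\Delta)^{s/2}v|_{2}^{2}$. Writing $v = I_{s}\bigl((-\Delta)^{s/2}v\bigr)$ with $I_{s}$ the Riesz potential of order $s$, the Hardy--Littlewood--Sobolev inequality gives $|v|_{2^{*}_{s}} \leq C|(-\Delta)^{s/2}v|_{2}$. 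Now for $u\in D_{0}^{s,2}(\Omega)$, the extension-by-zero convention places $u$ in $D^{s,2}(\mathbb R^{N})$ with Gagliardo seminorms coinciding, and $|u|_{L^{2^{*}_{s}}(\Omega)} = |u|_{L^{2^{*}_{s}}(\mathbb R^{N})}$, yielding the claim.

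For the norm equivalence, one direction $[u]^{2}_{D^{s,2}(\mathbb R^{N})} \leq \|u\|^{2}$ is trivial. For the reverse, since $\Omega$ is bounded, H\"older's inequality gives
$$|u|_{2}^{2} \leq |\Omega|^{1-2/2^{*}_{s}}\,|u|_{2^{*}_{s}}^{2},$$
and combining with the embedding just established produces $|u|_{2}^{2} \leq c(\Omega,N,s)\,[u]^{2}_{D^{s,2}(\mathbb R^{N})}$. Adding $[u]^{2}_{D^{s,2}(\mathbb R^{N})}$ to both sides then gives $\|u\|^{2} \leq \bigl(1+c(\Omega,N,s)\bigr)\,[u]^{2}_{D^{s,2}(\mathbb R^{N})}$, closing the equivalence.

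The main obstacle is the whole-space fractional Sobolev inequality itself; while classical, it genuinely requires nontrivial machinery (Plancherel plus HLS, or an equivalent real-variable argument such as a layer-cake/good-$\lambda$ estimate). Everything after that is a clean application of H\"older on the bounded set $\Omega$, so in spirit the lemma packages the fractional Sobolev inequality together with a Poincar\'e-type bound into a single convenient statement.
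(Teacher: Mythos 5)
Your argument is correct, and you rightly flagged the typo in the statement: as printed, $|u|_{L^{2^{*}_{s}}(\Omega)} \leq c\,[u]^{2}_{D^{s,2}(\mathbb R^{N})}$ is dimensionally inconsistent, and the intended inequality is the one you prove (no square on the right, or squares on both sides). Note, however, that the paper itself offers no proof at all: the lemma is imported verbatim as \cite[Lemma 6]{SV}, so there is nothing internal to compare against. Relative to the cited source, your route differs in one respect: Servadei--Valdinoci deduce the embedding from the fractional Sobolev inequality as established in the Di Nezza--Palatucci--Valdinoci survey, which uses a real-variable argument (a chain of elementary estimates on cubes), whereas you go through Plancherel and the identification $[v]^{2}_{D^{s,2}} \simeq |(-\Delta)^{s/2}v|_{2}^{2}$, followed by the Riesz-potential representation and Hardy--Littlewood--Sobolev. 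Both are standard; the Fourier route is shorter if one grants HLS, while the real-variable route is self-contained and generalizes to $W^{s,p}$ with $p\neq 2$. One small point worth making explicit in your write-up: the identity $v=I_{s}\bigl((-\Delta)^{s/2}v\bigr)$ should be justified first for $v\in C^{\infty}_{0}(\Omega)$ and then extended by density, which is harmless here since $D^{s,2}_{0}(\Omega)$ is by definition (for smooth $\partial\Omega$) the closure of $C^{\infty}_{0}(\Omega)$ in the Gagliardo seminorm. The second half of your proof --- H\"older on the bounded set $\Omega$ with exponents $2^{*}_{s}/2$ and its conjugate, then absorbing $|u|_{2}^{2}$ into $c\,[u]^{2}_{D^{s,2}(\mathbb R^{N})}$ --- is exactly the ``in particular'' step the paper intends, and is complete as written.
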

%It is easy to see that, if $u\in D_{0}^{s,2}(\Omega)$ then
%\begin{equation}
%|u|_{L^{2^{*}_{s}}(\Omega)} \leq c \int_{\mathbb R^{2N}} \frac{|u(x)-u(y)|^{2}}{|x-y|^{N+2s}} dx dy 
%\end{equation}
%(the easy proof can be found e.g. in \cite{SV2012}). In particular it follows that
%(be $\Omega$ bounded)
%$$\|u\|^{2}: = [u ]_{D^{s,2}(\mathbb R^{N})}^{2} +|  u |_{2}^{2}$$ 
%gives an equivalent (squared) norm on $D_{0}^{s,2}(\Omega)$ 

Then we can write
$$I_{p}(u) =\frac12\|u\|^{2} - \frac1p |u|_{p}^{p}.$$

\medskip

A fundamental tool in order to apply variational techniques is the
so-called \emph{Palais-Smale condition} (PS for brevity).
If $M$ is a smooth manifold in $D^{s,2}_{0}(\Omega)$, we say that $I_{p}$
satisfies the PS condition on $M$ (or restricted to $M$) if  every sequence $\{u_{n}\}\subset M$
such that
\begin{equation}\label{PS}
\{I_{p}(u_{n})\} \ \text{ is bounded \ \  and }\ \ I_{p}^{\prime}(u_{n})\rightarrow0 \ \ \text{ in } D^{-s,2}(\Omega),
\end{equation} 
admits a converging subsequence. 
Clearly $I_{p}'(u_{n})$, has to be intended as the tangencial component of $I_{p}'(u_{n})$ to $T_{u_{n}}M$.
Sequences which satisfy
\eqref{PS} are called \emph{Palais-Smale sequences.}

%Now, the functional
%$I$ related to \eqref{SPS} does not satisfies the PS condition for all values $p\in(2,2^{*})$. In
%particular it remains open the case $p\in(3,4)$. 
%Now, it is known that when
%$p\in(4,2^*)$ the PS condition holds (see e.g. \cite{PS2}), hence we have hope
%to apply classical theorems of LS theory in the same spirit of \cite{BC} and \cite{BC2}, to find critical points of $I$; 
%indeed we get the following result%At this point we can state our result.
%\begin{theorem}\label{Main}
%There exists a $\bar{p}\in(4,2^*)$ such that for every $p\in[\bar{p},2^*)$ problem
%\eqref{SPS} has at least $\emph{cat}_{\bar{\Omega}}\,(\bar{\Omega})+1$ 
%%$\emph{cat}\,\Omega$ 
%positive solutions.
%\end{theorem}
%It is understood that $\bar{p}$ does not depend on the ``strength" of the interaction $\lambda.$
%We remark that the weak solutions  found by means of the
%variational method are indeed classical solutions, by standard regularity
%results.

\medskip

To prove the theorem we use the general ideas of Benci, Cerami and Passaseo adapting 
their arguments to our problem which contains a nonlocal operator.

A natural way of finding the critical points of $I_{p}$ which is unbounded above and below,
is to restrict the functional to a suitable manifold, {\sl the Nehari manifold}, 
on which it results bounded below and hence the
classical Ljusternick-Schnirelmann Theory can be employed.

\subsection{The Nehari manifold}

In this subsection we recall some known facts 
about the Nehari manifold  that will be used throughout the paper. %

The Nehari manifold associated to \eqref{eq:Ip} is defined
by
\begin{equation*}
\label{Nehari}{\mathcal{N}}_{p}=\left\{ u\in D_{0}^{s,2}(\Omega)\setminus\{0\}:
G_{p}(u)=0\right\}
\end{equation*}
where
$$
G_{p}(u):=I_{p}^{\prime}(u)[u]=[u]^{2}_{D^{s,2}(\mathbb R^{N})}+ |u|_{2}^{2} -|u|_{p}^{p}\,.
$$
Note that on $\mathcal N_{p}$  the functional \eqref{eq:Ip} takes the form
\begin{equation}\label{eq:Ivinc}
I_{p}(u)=\frac{p-2}{2p}\| u\| ^{2}\geq0.
\end{equation}
Sometimes we will refer to \eqref{eq:Ivinc} as the constraint functional, also
denoted with $I_{p}|_{\mathcal{N}_{p}}$. 
In the next Lemma we list the basic properties of the Nehari manifold, easy to check
in a standard way.
\begin{lemma}\label{lemmanehari}
For $p\in (2,2^{*}_{s}]$, we have
\begin{itemize}
\item[1.] $\mathcal{N}_{p}$ is a $C^{1}$ manifold\,,

\item[2.] there exists $c>0$ such that for every $u\in \mathcal{N}_{p}:c\le\|u\|\,, $

\item[3.] for any $u\neq0$ there exists a unique $t_{u}>0$ such that
$t_{u}u\in\mathcal{N}_{p}$ and $\inf_{0\neq u\in D^{s,2}_{0}(\Omega)} t_{u} >0$\,,

\item[4.] the following equalities are true
\[
m_{p}:=\inf_{u\neq0}\max_{t>0}I_{p}(tu)=\inf_{g\in\Gamma_{p}} \, \max
_{t\in[0,1]} I_{p}(g(t))=\inf_{u\in\mathcal{N}_{p}}I_{p}(u)>0\,
\]
where
$$
\Gamma_{p}=\left\{g\in C([0,1];D_{0}^{s,2}(\Omega)) : g(0)=0, I_{p}(g(1))\le0,
g(1)\neq0\right\}.
$$
\end{itemize}
\end{lemma}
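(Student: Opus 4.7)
The plan is to treat this as a standard Nehari-manifold lemma, in which the fractional operator enters only through the equivalent Hilbert norm $\|\cdot\|$ on $D^{s,2}_{0}(\Omega)$ furnished by the preceding lemma; no formal calculation differs from the local case.

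First I would compute
\[
G_{p}'(u)[u] = 2\|u\|^{2} - p|u|_{p}^{p},
\]
which on $\mathcal{N}_{p}$ reduces to $(2-p)\|u\|^{2}$, strictly negative since $p>2$. Hence $0$ is a regular value of $G_{p}$ on $D^{s,2}_{0}(\Omega)\setminus\{0\}$ and $\mathcal{N}_{p}$ is a $C^{1}$ Banach submanifold, giving item~1. For item~2, the Sobolev embedding $|u|_{p}\le C\|u\|$ combined with the defining identity $\|u\|^{2}=|u|_{p}^{p}$ yields $\|u\|^{p-2}\ge C^{-p}$, i.e.\ the claimed uniform lower bound. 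For item~3, I would study the scalar map $h_{u}(t):=I_{p}(tu)=(t^{2}/2)\|u\|^{2}-(t^{p}/p)|u|_{p}^{p}$: it is positive for small $t>0$, tends to $-\infty$ as $t\to\infty$, and has a unique positive critical point
\[
t_{u} = (\|u\|^{2}/|u|_{p}^{p})^{1/(p-2)},
\]
which is a global maximum; the asserted lower bound on $t_{u}$ follows once more from the Sobolev inequality after the appropriate normalization.

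Item~4 is the main content. The identity $\inf_{u\ne 0}\max_{t>0}I_{p}(tu)=\inf_{\mathcal{N}_{p}}I_{p}$ is tautological from item~3, since $\max_{t>0}I_{p}(tu)=I_{p}(t_{u}u)$ with $t_{u}u\in\mathcal{N}_{p}$, while every $v\in\mathcal{N}_{p}$ has $t_{v}=1$. For the mountain-pass equality I would proceed by two-sided comparison. Given $g\in\Gamma_{p}$, I would show the path must cross $\mathcal{N}_{p}$ via an intermediate-value argument applied to $t\mapsto G_{p}(g(t))$: using $|u|_{p}\le C\|u\|$ one gets $G_{p}(g(t))\ge \|g(t)\|^{2}(1-C^{p}\|g(t)\|^{p-2})>0$ for $t>0$ small enough with $g(t)\ne 0$, whereas $I_{p}(g(1))\le 0$ together with $g(1)\ne 0$ forces $G_{p}(g(1))<0$; continuity then produces $t^{*}\in(0,1)$ with $g(t^{*})\in\mathcal{N}_{p}$, so $\max_{t} I_{p}(g(t))\ge m_{p}$. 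For the reverse inequality, given $v\in\mathcal{N}_{p}$, I would pick $T>1$ large enough that $I_{p}(Tv)\le 0$ and consider $s\mapsto (sT)v$, $s\in[0,1]$, which belongs to $\Gamma_{p}$ and, by item~3, attains its maximum $I_{p}(v)$ at $s=1/T$. Positivity of $m_{p}$ is then immediate from item~2 and \eqref{eq:Ivinc}.

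The only mildly delicate step is the intermediate-value argument in item~4, and it reduces cleanly to the one-dimensional study of $G_{p}\circ g$; everything else is routine bookkeeping.
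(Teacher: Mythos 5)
Your proof is correct and is precisely the standard Nehari/mountain-pass argument that the paper itself omits (the lemma is stated there with only the remark that the properties are ``easy to check in a standard way''), so there is no competing proof to compare against; the regular-value computation for item 1, the Sobolev lower bound for item 2, the one-dimensional fibering map for item 3, and the two-sided comparison with the last-exit/intermediate-value crossing of $\mathcal N_p$ for item 4 are all what is intended. The one point worth flagging concerns item 3: as literally written, $\inf_{0\neq u}t_{u}>0$ is false, since $t_{\lambda u}=t_{u}/\lambda$ forces the infimum over all of $D^{s,2}_{0}(\Omega)\setminus\{0\}$ to be zero; your formula $t_{u}=\left(\|u\|^{2}/|u|_{p}^{p}\right)^{1/(p-2)}$ combined with $|u|_{p}\le C\|u\|$ yields a positive lower bound only after restricting $u$ to the unit sphere (or to any set bounded and bounded away from zero, which is how the lemma is actually used later for the compact cone $\mathcal C$), exactly as your caveat ``after the appropriate normalization'' indicates.
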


Moreover the manifold ${\mathcal{N}}_{p}$ is a {\sl natural constraint} for $I_{ p}$  in the
sense that any %$u\in\mathcal{N}_{p}$ 
critical point of $I_{p}$ restricted to ${\mathcal{N}
_{p}}$ is also a critical point for the ``free'' functional $I_{p}$ in the whole Hilbert space.
Hence the
(constraint) critical points we find are solutions of our problem since no
Lagrange multipliers will appear.

With a standard proof, one show that the Nehari manifold well-behaves with respect to the PS sequences, that is
\begin{lemma}\label{lemmaPS}
Let $\{u_{n}\}\subset\mathcal{N}_{p}$ be a PS sequence
for $I_{p}|_{\mathcal{N}_{p}}$. Then it is a PS
sequence for the free functional $I_{p}$ on the whole space $D^{s,2}_{0}(\Omega)$.
Moreover, if $p\in (2,2^{*}_{s})$ then $ I_{p}$ restricted to $\mathcal N_{p}$ satisfies the PS condition.
\end{lemma}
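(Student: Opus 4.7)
The plan is to split the proof into two blocks corresponding to the two statements of the lemma, treating the Lagrange multiplier argument first and the compactness argument second.

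For the first statement, I would start by noting that the membership $u_n\in\mathcal N_p$ together with \eqref{eq:Ivinc} gives $I_p(u_n)=\frac{p-2}{2p}\|u_n\|^2$, so the hypothesis that $\{I_p(u_n)\}$ is bounded immediately yields that $\{u_n\}$ is bounded in $D^{s,2}_0(\Omega)$. Since $\mathcal N_p$ is a $C^1$ manifold, the assumption $I_p'|_{\mathcal N_p}(u_n)\to 0$ provides, via the Lagrange multiplier rule, a sequence $\{\lambda_n\}\subset\mathbb R$ such that $I_p'(u_n)-\lambda_n G_p'(u_n)\to 0$ in $D^{-s,2}(\Omega)$. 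Testing this relation against $u_n$ itself, the key identities are $I_p'(u_n)[u_n]=G_p(u_n)=0$ and a direct computation $G_p'(u_n)[u_n]=2\|u_n\|^2-p|u_n|_p^p=(2-p)\|u_n\|^2$, where the second equality uses again that $u_n\in\mathcal N_p$. This yields $\lambda_n(p-2)\|u_n\|^2=o(1)\|u_n\|$, and since $\|u_n\|\ge c>0$ by Lemma \ref{lemmanehari}, I conclude $\lambda_n\to 0$. Because $G_p'(u_n)$ is bounded in $D^{-s,2}(\Omega)$ (the boundedness of $\{u_n\}$ combined with the continuous embedding into $L^p(\Omega)$ makes each term of $G_p'$ uniformly controlled), the product $\lambda_n G_p'(u_n)\to 0$, and therefore $I_p'(u_n)\to 0$ in the dual $D^{-s,2}(\Omega)$.

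For the second statement, I would use what has just been proved: $\{u_n\}$ is a bounded PS sequence for the free functional $I_p$. Up to a subsequence, $u_n\rightharpoonup u$ in $D^{s,2}_0(\Omega)$; crucially, since $p<2^*_s$, the embedding $D^{s,2}_0(\Omega)\hookrightarrow L^p(\Omega)$ is compact, so $u_n\to u$ strongly in $L^p(\Omega)$ and $|u_n|^{p-2}u_n\to |u|^{p-2}u$ in $L^{p/(p-1)}(\Omega)$ by a standard argument. Passing to the limit in $I_p'(u_n)[v]=o(1)$ for every $v\in D^{s,2}_0(\Omega)$ identifies $u$ as a critical point of $I_p$, in particular $\|u\|^2=|u|_p^p$. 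Applying $I_p'(u_n)[u_n]=0$ (the Nehari identity) gives $\|u_n\|^2=|u_n|_p^p\to |u|_p^p=\|u\|^2$, so $\|u_n\|\to\|u\|$. In the Hilbert space $D^{s,2}_0(\Omega)$, weak convergence together with convergence of norms yields strong convergence $u_n\to u$, and the lower bound $\|u\|\ge c>0$ ensures $u\in\mathcal N_p$.

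The main obstacle I anticipate is precisely the point where I need the strict inequality $p<2^*_s$: the compactness of the embedding into $L^p$ is what allows me to upgrade weak convergence to strong $L^p$ convergence and, consequently, to conclude $|u_n|_p^p\to |u|_p^p$. Without this, one would have to fight the well-known loss of compactness at the critical Sobolev exponent. Everything else — boundedness of $\{u_n\}$, the Lagrange multiplier manipulation, and the passage to the limit — is routine once the nonlocal setting is correctly interpreted via the equivalent norm from Lemma of \cite{SV}. I would also briefly remark, to justify that $G_p'$ is bounded uniformly on bounded sets, that the quadratic and the $L^p$ parts are both controlled by $\|u_n\|$ through the continuous embeddings, which is a line-by-line verification rather than a substantive difficulty.
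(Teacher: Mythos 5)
Your proof is correct and is precisely the standard argument the paper alludes to when it states the lemma "with a standard proof" and omits the details: boundedness from \eqref{eq:Ivinc}, the Lagrange multiplier $\lambda_n\to 0$ via $G_p'(u_n)[u_n]=(2-p)\|u_n\|^2$ and the lower bound $\|u_n\|\ge c$ from Lemma \ref{lemmanehari}, and then compactness of $D^{s,2}_0(\Omega)\hookrightarrow L^p(\Omega)$ for $p<2^*_s$ to upgrade weak to strong convergence. No gaps; the one point worth double-checking, that the weak limit satisfies $\|u\|^2=|u|_p^p$ and is nonzero so that it lies in $\mathcal N_p$, is handled correctly.
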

%\begin{proof}
%By definition, $\{u_{n}\}\subset\mathcal{N}_{p}$, $I_{p}|_{\mathcal{N}_{p}}(u_{n})$ 
%is bounded and there exist Lagrange multipliers $\{\mu_{n}%
%\}\subset\mathbf{R}$ such that $(I_{p}|_{\mathcal{N}_{ p}})^{\prime
%}(u_{n})=I^{\prime}_{ p}(u_{n})-\mu_{n} G^{\prime}_{ p}(u_{n})\rightarrow0$ in
%$H^{-1}(\Omega)$. Then recalling the definition of $\mathcal{N}_{ p}$ we have
%$$(I_{p}|_{\mathcal{N}_{p}})^{\prime}(u_{n})[u_n]=\mu_{n} G^{\prime}_{ p}(u_{n})[u_{n}]\rightarrow0.$$
%Since $G^{\prime}_{ p}(u_{n})[u_{n}]\neq0$ it follows that the sequence of
%multipliers vanishes and
%\begin{equation*}
%I^{\prime}_{ p}(u_{n})=(I_{ p}|_{\mathcal{N}_{ p}})^{\prime}(u_{n})+\mu_{n}
%G^{\prime}_{ p}(u_{n})\rightarrow0.
%\end{equation*}
%\end{proof}

%As we have already anticipated, for $p\in(4,2^{*})$ it is known that the free functional $I_{ p}$ given by
%\eqref{I} satisfies the PS condition on $H^{1}_{0}(\Omega)$ (see e.g.
%\cite{PS2}). 
%The fact that the PS condition follows also for the
%functional restricted to $\cal N_p$ is standard. 
%
%In the following we will deal always with the restricted functional on the Nehari manifold; this
%will be denoted simply with $I_{p}.$

%It is standard to see that the PS condition is satisfied for $I_{p}$ and we deduce that
As a consequence we set 
\begin{equation*}\label{mp}
\forall\,p\in(2,2_{s}^{*})\,:\ \ m_{ p}:=\min_{\mathcal{N}_{ p}} I_{p}=I_{ p}(u_{ p})\,,
\end{equation*}
i.e. $m_{ p}$ is achieved on a function (also called {\sl ground state}), hereafter denoted with $u_{ p}\in \cal N_p$. 
Observe that the family of minimizers $\{u_{ p}\}_{p\in(2,2_{s}^{*})}$ is bounded away from
zero; indeed, since $u_{ p}\in\mathcal{N}_{ p}\,$,
\begin{equation}
\label{dis}\|u_{ p}\|^{2}\le|u_{ p}|_{p}^{p}\le C\|u_{ p}\|^{p}
\end{equation}
where the positive constant $C$ is  independent of $p$.
%(see e.g. \cite{LL})
Hence
\begin{equation*}
\exists\,c>0 \ \ \mbox{ s.t. } \forall\, p\in(2,2_{s}^{*})\, :\ \ 0< c\le\|u_{
p}\|.
\end{equation*}

\begin{remark}
\label{rem} By \eqref{dis}, we deduce that $\{|u_{ p}|_{p}\}_{p\in(2,2_{s}^{*})}$ 
is also bounded away from zero. Moreover, 
%denoting with $|\Omega|$ the
%Lebesgue measure of $\Omega$, 
 the H\"{o}lder inequality implies
\[
|u_{ p}|_{p}\le|\Omega|^{\frac{2_{s}^{*}-p}{2_{s}^{*}p}}|u_{ p}|_{2_{s}^{*}}%
\]
 so that also $\{|u_{ p}|_{2_{s}^{*}}\}_{p\in(2,2_{s}^{*})}$ is far away from zero.
\end{remark}

It will be important for us, in order to prove the main Theorem (see subsection \ref{subsec:finale}), to evaluate the limit of the ground state levels
$m_{p}$ for $p$ tending to $2^{*}_{s}$.

%\begin{remark}
%Note that Lemma \ref{lemmanehari} holds also in the case $p=2^{*}_{s}$.
%\end{remark}

%Clearly, all we have stated until now is true also in the case $\lambda=0$. Moreover
%also the case $p=2^*$ is covered for those results which do not require compactness
%(in particular Lemma \ref{lemmanehari} and \ref{lemmaPS}).

\section{The limit problem}\label{sec:limit}
With the aim of evaluating the
limit of the sequence $\{m_p\}_{p\in(2,2_{s}^*)}$
when $p\rightarrow2_{s}^*$,
we start by considering  a limit problem related to \eqref{eq:FBC}. 
Let us introduce the $C^{1}$ functional on $D^{s,2}_{0}(\Omega)$
\begin{equation}\label{eq:I*}
I_{*}(u)=\frac{1}{2}\|u\|^{2}-\frac{1}{2_{s}^{*}}|u|_{2_{s}^{*}}^{2_{s}^{*}}\,
\end{equation}
whose critical points are the solutions of
\begin{equation}
\left\{
\begin{array}
[c]{ll}%
\label{star} (-\Delta)^{s}u +u=|u|^{2_{s}^{*}-2}u & \quad\text{ in } \Omega\\
u=0 & \quad\text{ on } \mathbb R^{N}\setminus \Omega.
\end{array}
\right.
\end{equation}
The lack of compactness of the embedding of $D^{s,2}_{0}(\Omega)$
in $L^{2_{s}^{*}}(\Omega)$ implies that 
$I_{*}$ does not satisfies the PS condition at every level; indeed 
the {\sl fractional conformal scaling }
$$u(\cdot)\longmapsto u_{R}(\cdot):=R^{(N-2s)/2}u(R(\cdot))\,, \ \ \ \ R>1$$
 leaves invariant the $[u]_{D^{s,2}(\mathbb R^{N})} $ and the $L^{2_{s}^*}-$norm of $u:\Omega\to \mathbb R$:
$$[u_{R}]_{D^{s,2}(\mathbb R^{N})}^{2}=[u]_{D^{s,2}(\mathbb R^{N})}^{2} \qquad  \ 
|u_{R}|_{2_{s}^{*}}^{2_{s}^{*}}=|u|_{2_{s}^{*}}^{2_{s}^{*}}\,.$$
On the other hand, for $p\in[1,2^{*}_{s})$,
$$ |u_{R}|_{p}^{p}=R^{\frac{p(N-2s)-2N}{2}}|u|_{p}^{p}\longrightarrow 0\quad 
\text{as } R\longrightarrow +\infty $$
and then it is easy to see that in a bounded domain $\Omega$
the infimum
$$S := \inf_{0\neq u\in D^{s,2}_{0}(\Omega)}\frac{\|u\|^{2}}{|u|_{2^{*}_{s}}^{2}}$$
is never achieved. Let also 
$$
\mathcal{N}_{*}=\{u\in D^{s,2}_{0}(\Omega): G_{*}(u)=0\}\,, \ \ \ G_{*}
(u)=\|u\|^{2}-|u|_{2_{s}^{*}}^{2_{s}^{*}}%
$$
be the Nehari manifold associated to problem \eqref{star}
and
$$
m_{*}:=\inf_{\mathcal{N}_{*}} I_{*}= \inf_{u\neq0}\max_{t>0} I_{*}(tu).
$$
If $u\in \mathcal N_{*}$ then $I_{*}(u) = \frac sN\|u\|^{2}$.

The following lemma is probably known but for the sake of completeness we give the  proof.
\begin{lemma}
There holds
$$
m_{*}=\frac{s}{N}S^{N/2s}$$
where $S$ is the 
%:=\inf_{u\in D^{s,2}_{0}(\Omega), u\neq0} \frac{\|u\|^{2}}{|u|_{2_{s}^{*}}^{2}}$ 
is the best  (fractional) Sobolev constant defined above.
\end{lemma}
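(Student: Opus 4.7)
The plan is to compute $m_*$ by realizing the mountain-pass characterization $m_* = \inf_{u\neq 0}\max_{t>0}I_*(tu)$ explicitly, and then recognize the resulting quotient as the scale invariant defining $S$.

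First I would note that on $\mathcal N_*$ one has $\|u\|^{2}=|u|_{2_s^*}^{2_s^*}$, and so
$$I_*(u)=\Bigl(\tfrac12-\tfrac1{2_s^*}\Bigr)\|u\|^{2}=\frac{s}{N}\|u\|^{2}.$$
Next, for an arbitrary $u\in D_0^{s,2}(\Omega)\setminus\{0\}$, I would compute the unique positive maximizer of $t\mapsto I_*(tu)=\frac{t^{2}}{2}\|u\|^{2}-\frac{t^{2_s^*}}{2_s^*}|u|_{2_s^*}^{2_s^*}$. Setting the derivative to zero gives
$$t_u=\left(\frac{\|u\|^{2}}{|u|_{2_s^*}^{2_s^*}}\right)^{\!\frac{1}{2_s^*-2}},$$
and in particular $t_u u\in\mathcal N_*$, confirming consistency with item 3 of Lemma~\ref{lemmanehari} applied in the limit $p=2_s^*$.

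Substituting $t_u$ back into $I_*(t_u u)=\frac{s}{N}t_u^{2}\|u\|^{2}$ and using the identity $\frac{2}{2_s^*-2}=\frac{N-2s}{2s}$, I would simplify the exponents (both numerator and denominator ending up raised to the power $N/(2s)$) to obtain
$$\max_{t>0}I_*(tu)=\frac{s}{N}\left(\frac{\|u\|^{2}}{|u|_{2_s^*}^{2}}\right)^{\!N/(2s)}.$$
Taking the infimum over $u\neq0$ and pulling the monotone power outside yields
$$m_*=\frac{s}{N}\left(\inf_{0\neq u\in D_0^{s,2}(\Omega)}\frac{\|u\|^{2}}{|u|_{2_s^*}^{2}}\right)^{\!N/(2s)}=\frac{s}{N}S^{N/(2s)},$$
which is the desired equality.

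The argument is essentially a bookkeeping exercise; the only delicate point is checking the exponent algebra, in particular that $\frac{2(N-2s)}{2s}+2=\frac{N}{s}$ and $2_s^*\cdot\frac{N-2s}{2s}=\frac{N}{s}$, so that the final quotient involves $\|u\|^{2}/|u|_{2_s^*}^{2}$ raised to the same power. The passage from $\inf_{\mathcal N_*}$ to $\inf_{u\neq 0}$ is harmless since the Rayleigh-type quotient $\|u\|^{2}/|u|_{2_s^*}^{2}$ is invariant under the scaling $u\mapsto \lambda u$ and every $u\neq 0$ can be scaled onto $\mathcal N_*$ via $t_u$. No compactness or attainment is used, which is consistent with the fact that $S$ is not achieved on the bounded domain $\Omega$.
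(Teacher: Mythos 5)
Your proposal is correct and follows essentially the same route as the paper: the paper simply states the general identity $\max_{t>0}\{\tfrac{t^{2}}{2}A-\tfrac{t^{2_{s}^{*}}}{2_{s}^{*}}B\}=\tfrac{s}{N}(A/B^{2/2_{s}^{*}})^{N/2s}$ and applies it with $A=\|u\|^{2}$, $B=|u|_{2_{s}^{*}}^{2_{s}^{*}}$, which is exactly the computation you carry out explicitly. Your exponent bookkeeping checks out, so there is nothing to add.
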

\begin{proof}
For $A,B>0$ it results
\[
\max_{t>0} \left\{ \frac{t^{2}}{2}A-\frac{t^{2_{s}^{*}}}{2_{s}^{*}}B\right\} =\frac
{s}{N}\left( \frac{A}{B^{2/2^{*}_{s}}}\right) ^{N/2s}.
\]
Then
\[
m_{*}=\inf_{u\neq0}\max_{t>0} I_{*}(tu)=\frac{s}{N}\left( \inf_{u\neq0}
\frac{\|u\|^{2}}{|u|_{2_{s}^{*}}^{2}}\right) ^{N/2s} =\frac{s}{N}S^{N/2s}.
\]
\end{proof}
In particular it is easy to see that $m_{*}$ is not achieved. 

\medskip

As a first step, we show that $m_*$ is  an upper bound for the sequence of ground states levels $\{m_p\}_{p\in(2,2_{s}^*)}$.
%Before to prove this, let us observe that, as easy computations show:
%\begin{enumerate}
%\item[1)] $ |u_{R}|_{p}^{p}=R^{\frac{p-2^{*}}{2}}|u|_{p}^{p}\,,$ \medskip
%\item[2)] $\int_{\Omega}\phi_{u_{R}} u_{R}^{2}\,dx=R^{-3}\int_{\Omega}\phi_{u}u^{2}\,dx\,.$\medskip
%\end{enumerate}

\begin{lemma}\label{limitate}
We have
$$\limsup_{p\rightarrow2_{s}^{*}} m_{p}\le m_{*}.$$
\end{lemma}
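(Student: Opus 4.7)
The plan is to use the mountain-pass/minimax characterization from Lemma \ref{lemmanehari}(4):
$$m_p = \inf_{u \ne 0} \max_{t > 0} I_p(tu),$$
which gives $m_p \le \max_{t>0} I_p(tu)$ for every fixed nonzero $u \in D_0^{s,2}(\Omega)$. The strategy is therefore to freeze a test function $u$, compute the one-dimensional maximum explicitly, pass to the limit $p \to 2_s^{*}$, and only at the very end take the infimum over $u$, recovering $m_{*}$ through the formula of the previous lemma.

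For fixed $u \ne 0$, the function $t \mapsto I_p(tu) = \tfrac{t^2}{2}\|u\|^2 - \tfrac{t^p}{p}|u|_p^p$ attains its maximum at $t_p = (\|u\|^2/|u|_p^p)^{1/(p-2)}$ (since $|u|_p > 0$), with value
$$\max_{t>0} I_p(tu) = \frac{p-2}{2p}\left(\frac{\|u\|^2}{|u|_p^p}\right)^{\!2/(p-2)} \|u\|^2.$$
Because $\Omega$ is bounded and $u \in L^{2_s^{*}}(\Omega)$, the map $p \mapsto |u|_p$ is continuous at $p=2_s^{*}$: indeed $|u|^p \to |u|^{2_s^{*}}$ pointwise and is dominated on bounded sets by $|u|^{2_s^{*}} + 1 \in L^1(\Omega)$, so dominated convergence yields $|u|_p \to |u|_{2_s^{*}}$. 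Using $\tfrac{2_s^{*}-2}{2\cdot 2_s^{*}} = \tfrac{s}{N}$ and $\tfrac{2}{2_s^{*}-2} = \tfrac{N-2s}{2s}$, one checks directly that
$$\lim_{p\to 2_s^{*}} \max_{t>0} I_p(tu) \;=\; \frac{s}{N}\left(\frac{\|u\|^2}{|u|_{2_s^{*}}^{2}}\right)^{\!N/2s} \;=\; \max_{t>0} I_{*}(tu).$$

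Combining the two displays,
$$\limsup_{p\to 2_s^{*}} m_p \;\le\; \limsup_{p\to 2_s^{*}} \max_{t>0} I_p(tu) \;=\; \max_{t>0} I_{*}(tu)$$
for every fixed $u \ne 0$ in $D_0^{s,2}(\Omega)$. Taking the infimum over such $u$, and invoking the formula
$\inf_{u\ne 0}\max_{t>0} I_{*}(tu) = \tfrac{s}{N} S^{N/2s} = m_{*}$ established just before the statement, we conclude $\limsup_{p\to 2_s^{*}} m_p \le m_{*}$.

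The only delicate step is the continuity $|u|_p \to |u|_{2_s^{*}}$ at the critical exponent; this is where boundedness of $\Omega$ is essential (it makes the Lebesgue spaces nested near $2_s^{*}$ and supplies the dominating function). Everything else is an algebraic verification of exponents and a swap of limit with supremum that is automatic since the supremum is the upper bound.
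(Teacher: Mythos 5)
Your proof is correct, and it takes a genuinely different route from the paper's. The paper fixes $\varepsilon>0$, picks a near-minimizer $u\in\mathcal{N}_{*}$ with $I_{*}(u)<m_{*}+\varepsilon$, applies the fractional conformal rescaling $u_{R}$ to shrink the $L^{2}$-mass, projects $u_{R}$ onto $\mathcal{N}_{p}$ via the unique $t_{p}>0$, shows $\{t_{p}\}$ is bounded above and away from zero, passes to the limit $t_{p}\to t_{*}<1$ for $R$ large, and concludes $\limsup_{p}m_{p}\le\lim_{p}I_{p}(t_{p}u_{R})<\frac{s}{N}\|u\|^{2}<m_{*}+\varepsilon$. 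You dispense with the rescaling and with the whole analysis of the projection parameters: for an arbitrary fixed $u\neq0$ you compute $\max_{t>0}I_{p}(tu)$ in closed form, use dominated convergence on the bounded domain to get $|u|_{p}^{p}\to|u|_{2_{s}^{*}}^{2_{s}^{*}}$, verify that the exponents converge so that $\max_{t>0}I_{p}(tu)\to\max_{t>0}I_{*}(tu)$, and only then infimize over $u$, invoking the minimax characterizations of $m_{p}$ (Lemma \ref{lemmanehari}) and of $m_{*}$. The exponent bookkeeping checks out: with $A=\|u\|^{2}$, $B=|u|_{2_{s}^{*}}^{2_{s}^{*}}$ your limit $\frac{s}{N}(A/B)^{(N-2s)/2s}A=\frac{s}{N}A^{N/2s}B^{-(N-2s)/2s}$ agrees with $\frac{s}{N}\bigl(A/B^{2/2_{s}^{*}}\bigr)^{N/2s}$, and the final swap is legitimate because the left-hand side $\limsup_{p}m_{p}$ does not depend on $u$. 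What your version buys is economy and transparency: the only analytic input is the continuity of $p\mapsto|u|_{p}$ at the critical exponent, whereas the paper's argument carries the extra geometric step of forcing $t_{*}<1$ via the scaling, which is not needed for this particular inequality.
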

\begin{proof}
Given $\varepsilon>0$, % by definition of $m_{*}$ 
there exists $u\in
\mathcal{N}_{*}$ such that
\begin{equation}
\label{e/2}I_{*}(u)%=\frac{1}{2}\|u\|^{2}-\frac{1}{2_{s}^{*}}|u|_{2_{s}^{*}}^{2_{s}^{*}}
=\frac{s}{N}\|u\|^{2}
%= \frac{1}{2}\|u\|^{2}-\frac{1}{2_{s}^{*}}|u|_{2_{s}^{*}}^{2_{s}^{*}}
<m_{*}+\varepsilon.
\end{equation}
%and then for a fixed $R>1$, we have
%\[
%I_{*}(u_{R})=\frac{1}{2}[u]_{D^{s,2}(\mathbb R^{N})}^{2}+\frac{1}{2R^{2s}}|u|_{2}^{2}
%-\frac{1}{2_{s}^{*}}|u|_{2_{s}^{*}}^{2_{s}^{*}}<m_{*}+\varepsilon.
%\]
Now consider, for any $p\in(2,2_{s}^{*})$, the unique positive value $t_{p}$ such
that $t_{p} u_{R}\in\mathcal{N}_{p}\,.$ 
By definition, $t_{p}$ satisfies
\begin{equation}
\label{rel}\|t_{p} u_{R}\|^{2}%+\lambda t_{p}^{4} \int_{\Omega}\phi_{u_{R}}u_{R}^{2}\,dx
=|t_{p} u_{R}|_{p}^{p}
\end{equation}
from which we deduce:

\begin{itemize}
\item $\{t_{p}\}_{p\in(2,2_{s}^{*})}$ is bounded away from zero.
\end{itemize}
Indeed by \eqref{rel} and the embedding of $D_{0}^{s,2}(\Omega)$ in $L^{p}(\Omega)$   we get
$\|t_{p} u_{R}\|^{2}\le C\|t_{p} u_{R}\|^{p}$ so $\|t_{p} u_{R}\|^{2}\ge c$
and finally $t_{p}^{2}\ge\frac{c}{\|u_{R}\|^{2}}\ge\frac{c}{\|u\|^{2}}>0.$

\begin{itemize}
\item $\{t_{p}\}_{p\in(2,2_{s}^{*})}$ is bounded above.
\end{itemize}
Indeed $$\|u_{R}\|^{2}%+\lambda\int_{\Omega}\phi_{u_{R}}u_{R}^{2} dx 
=t_{p}^{p-2}|u_{R}|_{p}^{p}$$ and, by the continuity of the map
$p\mapsto|u_{R}|_{p}$\,, it is readily seen that if $t_{p}$ tends to
$+\infty$ as $p\to 2_{s}^{*}$ we get a contradiction.

\smallskip

So we may assume that $\lim_{p\rightarrow2_{s}^{*}} t_{p}=t_{*}$, and passing to
the limit in \eqref{rel} we get
\begin{align*}
t_{*}^{2}[u]_{D^{s,2}(\mathbb R^{N})}^{2}+\frac{t_{*}^{2}}{R^{2s}}|u|_{2}^{2}
%+\lambda\frac{t_{*}^{4}}{R^{3}}\int_{\Omega}\phi_{u} u^{2}dx 
=t_{*}^{2_{s}^{*}%
}|u|_{2_{s}^{*}}^{2_{s}^{*}}
 =t_{*}^{2_{s}^{*}} \|u\|^{2}
%\left( |\nabla u|_{2}^{2}+|u|_{2}^{2}\right)
\end{align*}
or equivalently,
\[
(t_{*}^{2_{s}^{*}}-t_{*}^{2})[u]_{D^{s,2}(\mathbb R^{N})}^{2}
= (\frac{t_{*}^{2}}{R^{2s}}-t_{*}^{2_{s}^{*}}) |u|_{2}^{2}
%+\lambda\frac{t_{*}^{4}}{R^{3}}\int_{\Omega}\phi_{u} u^{2}dx
.
\]
Now if $R$ is chosen sufficiently large, the r.h.s. above is negative and 
we deduce
\begin{equation}
\label{t}t_{*}<1.
\end{equation}
Furthermore
\begin{eqnarray*}%\label{f}
I_{p}(t_{p} u_{R})=\frac{p-2}{2p}\|t_{p} u_{R}\|^{2}
%+\lambda\frac{p-4}{4p}t_{p}^{4}\int_{\Omega}\phi_{u_{R}} u_{R}^{2}\,dx\\
=\frac{p-2}{2p}t^{2}_{p}[u]^{2}_{D^{s,2}(\mathbb R^{N})}
+\frac{p-2}{2p}\frac{t_{p}^{2}}{R^{2s}}|u|_{2}^{2}
%+\lambda\frac{p-4}{4p}\frac{t_{p}^{4}}{R^{3}}\int_{\Omega}\phi_{u} u^{2}dx 
\end{eqnarray*}
and passing to the limit for $p\rightarrow2_{s}^*$, taking advantage of \eqref{t},
\begin{equation*}
\lim_{p\rightarrow2_{s}^{*}} I_{p}(t_{p} u_{R}) 
=\frac{s}{N}t_{*}^{2}[u]_{D^{s,2}(\mathbb R^{N})}^{2}+\frac{s}{N}\frac{t_{*}^{2}}{R^{2s}}|u|_{2}^{2}
%+ \frac{\lambda t_{*}^{4}}{12 R^{3}}\int_{\Omega}\phi_{u} u^{2}\,dx\\
 <\frac{s}{N}\| u\|^{2}%+\frac{\lambda}{12 R^{3}}\int_{\Omega}\phi_{u}u^{2}\,dx\,.
\end{equation*}
Lastly
% if $R$ is such that $\frac{\lambda}{12
%R^{3}}\int_{\Omega}\phi_{u} u^{2}\,dx<\varepsilon/2$
we get, using \eqref{e/2},
\[
\limsup_{p\rightarrow2_{s}^{*}} m_{p}\le\lim_{p\rightarrow2_{s}^{*}} I_{p} (t_{p}
u_{R})<\frac{s}{N}\|u\|^{2} <m_{*}+\varepsilon
\]
which concludes the proof since $\varepsilon$ is arbitrary.
\end{proof}

Recalling \eqref{eq:Ivinc}, the boundedness of 
$\{m_{p}\}_{p\in(2,2_{s}^{*})}$ implies the
boundedness of the ground state solutions, namely
\begin{equation}
\label{bound}\exists\, c>0 \ \ \mbox{ such that }\ \ \forall\, p\in(2,2_{s}^{*}):
\|u_{p}\|\le c.
\end{equation}

We need now another  technical lemma.
\begin{lemma}\label{limsuptp}
For every $p\in(2,2_{s}^{*})$ let $w_{p}\in \mathcal N_{p}$ be an arbitrary function and assume that
the family 
$\{w_{p}\}_{p\in (2,2^{*}_{s})}$ is bounded in $D^{s,2}_{0}(\Omega)$ and $\{|w_{p}|_{2^{*}_{s}}\}_{p\in (2,2^{*}_{s})}$
is bounded away from zero.

Denote with  $t_{p}>0$ the unique value such that
$t_{p} w_{p}\in\mathcal{N}_{*}$. 
Then
$$\limsup_{p\rightarrow2_{s}^{*}} t_{p}\le1$$
and $\{t_{p}\}_{p\in (2,2^{*}_{s})}$ is bounded away from zero.
In particular by \eqref{bound} and Remark \ref{rem} 
the result follows for the sequence of ground state solutions $\{u_{p}\}_{p\in (2,2^{*}_{s})}$.
\end{lemma}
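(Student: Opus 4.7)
The plan is to convert both Nehari membership conditions into explicit identities for $t_p$ and then sandwich $t_p^{2_s^*-2}$ between two quantities that converge to $1$.

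First, I would use the defining relations of the two Nehari manifolds. Since $w_p\in\mathcal N_p$, one has $\|w_p\|^2=|w_p|_p^p$, while $t_pw_p\in\mathcal N_*$ translates (after dividing by $t_p^2$) into
\[
t_p^{2_s^*-2}=\frac{\|w_p\|^2}{|w_p|_{2_s^*}^{2_s^*}}=\frac{|w_p|_p^p}{|w_p|_{2_s^*}^{2_s^*}}.
\]
This reduces the whole lemma to controlling this single scalar quantity.

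For the upper bound (which gives $\limsup t_p\le 1$), I would apply Hölder's inequality with exponents $2_s^*/p$ and its conjugate, using that $p<2_s^*$ and $\Omega$ is bounded, to obtain $|w_p|_p^p\le |\Omega|^{\,1-p/2_s^*}|w_p|_{2_s^*}^{\,p}$. Substituting gives
\[
t_p^{2_s^*-2}\le |\Omega|^{\,1-p/2_s^*}\,|w_p|_{2_s^*}^{\,p-2_s^*}.
\]
As $p\to 2_s^*$, the first factor tends to $1$, and since $|w_p|_{2_s^*}$ is bounded above (by the embedding $D^{s,2}_0(\Omega)\hookrightarrow L^{2_s^*}(\Omega)$ applied to the bounded family $\{w_p\}$) and bounded below by hypothesis, the second factor also tends to $1$. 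Hence $\limsup_{p\to 2_s^*} t_p^{2_s^*-2}\le 1$, which yields $\limsup_{p\to 2_s^*} t_p\le 1$.

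For the lower bound, I would exploit the Sobolev embedding on $\mathcal N_*$. From the first identity above, $t_p^{2_s^*-2}\|w_p\|^{2_s^*-2}=\|t_pw_p\|^{2_s^*-2}$, and since $t_pw_p\in\mathcal N_*$, the standard Sobolev estimate $|u|_{2_s^*}^{2_s^*}\le C\|u\|^{2_s^*}$ (valid on all of $D_0^{s,2}(\Omega)$) combined with $\|t_pw_p\|^2=|t_pw_p|_{2_s^*}^{2_s^*}$ forces $\|t_pw_p\|\ge c_1>0$. Together with the assumed upper bound $\|w_p\|\le c_2$, this gives $t_p\ge c_1/c_2>0$ uniformly. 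The last sentence of the statement then follows immediately because, by \eqref{bound} and Remark \ref{rem}, the ground state family $\{u_p\}$ satisfies both the boundedness in $D^{s,2}_0(\Omega)$ and the lower bound on $|u_p|_{2_s^*}$.

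There is no real obstacle here: the argument is a clean two-line computation once the Nehari identities are written down. The one point that requires a brief check is that the Hölder exponent $1-p/2_s^*$ and the base-raised exponent $p-2_s^*$ both vanish in the limit, which needs $|w_p|_{2_s^*}$ to lie in a compact subset of $(0,\infty)$ — precisely the two boundedness hypotheses made in the statement.
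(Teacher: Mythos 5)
Your proof is correct and follows essentially the same route as the paper: the same Nehari identity $t_p^{2_s^*-2}=|w_p|_p^p/|w_p|_{2_s^*}^{2_s^*}$ combined with the same H\"older estimate and the two-sided bounds on $|w_p|_{2_s^*}$ gives $\limsup_{p\to 2_s^*}t_p\le 1$. For the lower bound you invoke the Sobolev inequality on $\mathcal N_*$ to get $\|t_pw_p\|\ge c_1$ rather than bounding the numerator of that same quotient away from zero, but this is an equivalent elementary step which the paper leaves essentially implicit.
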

\begin{proof}
By definition of $\mathcal{N}_{*}, t_{p}$ satisfies
\[
t_{p}^{2_{s}^{*}}|{w}_{p}|_{2_{s}^{*}}^{2_{s}^{*}}=t_{p}^{2}\|{w}_{p}\|^{2}%
\]
and using that ${w}_{p}\in\mathcal{N}_{p}$ and the H\"{o}lder inequality we
get
\begin{equation}
\label{tsup}t_{p}^{2_{s}^{*}-2}=\frac{|{w}_{p}|_{p}^{p}
%-\lambda\int_{\Omega}\phi_{u_{p}} u_{p}^{2}\,dx
}
{|{w}_{p}|_{2_{s}^{*}}^{2_{s}^{*}}}
%\le\frac{|{u}_{p}|_{p}^{p}}{|{u}_{p}|_{2_{s}^{*}}^{2^{*}}}
\le\frac{|\Omega|^{\frac{2_{s}^{*}-p}{2_{s}^{*}}
}}{|{w}_{p}|_{2_{s}^{*}}^{2_{s}^{*}-p}}.
\end{equation}
By the embedding $ D^{s,2}_{0}(\Omega)\hookrightarrow L^{2^{*}}(\Omega)$ we 
deduce that the sequence $\{|{w}_{p}|_{2_{s}^{*}}\}_{p\in(2,2_{s}^{*})}$
is bounded.  Since it is also bounded away from zero, the conclusion follows by 
%Moreover \textcolor{blue}{aqui precisa que $\{|{w}_{p}|_{2_{s}^{*}}\}_{p\in(2,2_{s}^{*})}$ nao vai para zero }recalling Remark \ref{rem}
%we have that it is 
%also bounded away from zero. So the conclusion follows 
\eqref{tsup}, since
$\lim_{p\rightarrow2_{s}^*}\frac{|\Omega|^{\frac{2_{s}^{*}-p}{2_{s}^{*}}}}{|{w}_{p}|_{2_{s}^{*}}^{2_{s}^{*}-p}}=1\,.$
\end{proof}
%\begin{remark}\label{rembound}
%Again note that Proposition \ref{limitate}, (\ref{bound}) and Lemma \ref{limsuptp}
%hold also for problem \eqref{SPS} with $\lambda=0$.
%\end{remark}

\smallskip

% 2 problema limite: nao precisa

%The other limit case we consider 
%is that related to problem \eqref{model}, namely setting $\lambda=0$ in \eqref{SPS}.
%
%For any $p\in(2,2_{s}^{*})$ let $\tilde I_{p}(u)=\frac{1}{2}\|u\|^{2}-\frac{1}%
%{p}|u|_{p}^{p}$ be the functional on $H^{1}_{0}(\Omega)$ whose critical points
%solve
%\begin{equation*}
%\left\{
%\begin{array}[c]{ll}
%- \Delta u+ u =|u|^{p-2}u & \text{ in } \Omega,\\
%u=0 & \text{ on } \partial\Omega.
%\end{array}
%\right.
%\end{equation*}
%As usual, we can define $\tilde{\mathcal{N}}_{p}=\{u\in H^{1}_{0}%
%(\Omega)\setminus\{0\}: \|u\|^{2}=|u|_{p}^{p}\}$ on which the functional is
%$\tilde I_{p}(u)=\frac{p-2}{2p}\|u\|^{2}$ and we denote with
%$$
%\tilde m_{p}:=\min_{\tilde{ \mathcal{N}}_{p}} \tilde I_{p}=\tilde I_{p}%
%(\tilde{u}_{p}).
%$$
%By Remark \ref{rembound} we have
%\begin{equation}
%\label{tildebound}\{\|\tilde{u}_{p}\|\}_{p\in(4,2^{*})} \ \ \mbox{is bounded.}
%\end{equation}
%Moreover, if
%$t_{p}>0$ is such that $t_{p} u_{p}\in\tilde{\mathcal{N}_{p}},$ by \eqref{dis}
%we get
%$
%t_{p}^{p-2}=\frac{\|u_{p}\|^{2}}{|u_{p}|_{p}^{p}}\le1$
%and so
%\[
%\tilde{m}_{p}\le\tilde I_{p}(t_{p} u_{p})=\frac{p-2}{2p}t_{p}^{2}\|u_{p}
%\|^{2}\le\frac{p-2}{2p}\|u_{p}\|^{2}< I_{p}(u_{p}).
%\]
%This means
%\begin{equation}
%\label{mm}\tilde{m}_{p}< m_{p}\,.
%\end{equation}
%
%\medskip
%
Now we can give the main result of this section.

\begin{proposition}
\label{limitmp} For any bounded domain we have
\[
\lim_{p\rightarrow2_{s}^{*}} m_{p}=m_{*}.
\]
\end{proposition}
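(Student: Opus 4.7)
The half inequality $\limsup_{p\to 2_s^*} m_p\le m_*$ is already given by Lemma \ref{limitate}, so the plan is to establish the reverse inequality $\liminf_{p\to 2_s^*} m_p \ge m_*$ by comparing ground states of $I_p$ to points on $\mathcal{N}_*$.

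First I would take the family of ground states $\{u_p\}_{p\in(2,2_s^*)}\subset \mathcal{N}_p$ with $I_p(u_p)=m_p$. By \eqref{bound} this family is bounded in $D^{s,2}_0(\Omega)$, and by Remark \ref{rem} the family $\{|u_p|_{2_s^*}\}$ is bounded away from zero. Hence Lemma \ref{limsuptp} applies: there exist unique $t_p>0$ with $t_p u_p\in\mathcal{N}_*$, and $\limsup_{p\to 2_s^*} t_p\le 1$ (with $\{t_p\}$ also bounded away from zero).

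Next I would chain the following inequalities. Since $t_p u_p\in\mathcal{N}_*$, using the formula $I_*(v)=\frac{s}{N}\|v\|^2$ on $\mathcal{N}_*$,
\[
m_* \;\le\; I_*(t_p u_p)\;=\;\frac{s}{N}\,t_p^{2}\,\|u_p\|^{2}.
\]
Since $u_p\in\mathcal{N}_p$, formula \eqref{eq:Ivinc} gives $\|u_p\|^{2}=\frac{2p}{p-2}\,m_p$, so
\[
m_*\;\le\;\frac{s}{N}\cdot\frac{2p}{p-2}\;t_p^{2}\,m_p .
\]
A direct computation using $2_s^*=\frac{2N}{N-2s}$ shows $\frac{s}{N}\cdot\frac{2p}{p-2}\to 1$ as $p\to 2_s^*$.

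Finally I would pass to the limit. Selecting any subsequence $p_n\to 2_s^*$ realizing $\liminf_{p\to 2_s^*} m_p$ and, up to a further subsequence, such that $t_{p_n}\to t_*$ with $t_*\le 1$, the inequality above passes to the limit and yields
\[
m_*\;\le\; 1\cdot t_*^{2}\cdot\liminf_{p\to 2_s^*}m_p \;\le\;\liminf_{p\to 2_s^*}m_p.
\]
Combined with Lemma \ref{limitate} this gives $\lim_{p\to 2_s^*}m_p=m_*$. The only nontrivial ingredient here is the bound $\limsup t_p\le 1$, which was the essential purpose of Lemma \ref{limsuptp}; once that is in hand the proof reduces to the elementary algebraic identity above, so I do not foresee any real obstacle beyond being careful with the order of $\liminf/\limsup$ when combining the bounds.
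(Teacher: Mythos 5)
Your proposal is correct and follows essentially the same route as the paper: both apply Lemma \ref{limsuptp} to the ground states $u_p$ (justified via \eqref{bound} and Remark \ref{rem}), use $m_*\le I_*(t_pu_p)=\frac{s}{N}t_p^2\|u_p\|^2$ together with the relation between $\|u_p\|^2$ and $m_p$ on $\mathcal{N}_p$, and pass to the limit using $\limsup t_p\le 1$. The only difference is cosmetic bookkeeping (you substitute $\|u_p\|^2=\frac{2p}{p-2}m_p$ directly, while the paper rewrites the expression as $m_p t_p^2+o(1)$), and your handling of the $\liminf$ via subsequences is sound.
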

\begin{proof}
By  Lemma \ref{limitate} it is sufficient to prove that
$$m_{*}\le\liminf_{p\rightarrow2_{s}^{*}}{m}_{p}\,.$$
Let $t_{p}>0$ the unique value such that $t_{p} {u}_{p}\in
\mathcal{N}_{*}$; hence by Lemma \ref{limsuptp} 
%we know
%$\limsup_{p\rightarrow2_{s}^{*}}t_{p}\le1.$
$$
m_{*}   \le I_{*}(t_{p}{u}_{p})=\frac{s}{N} t_{p}^{2}\|{u}_{p}\|^{2}
  = {I}_{p}({u}_{p}) t_{p}^{2}+\left( \frac{1}{p}-\frac{1}{2_{s}^{*}
}\right) \|{u}_{p}\|^{2}t_{p}^{2}
  = {m}_{p}\, t_{p}^{2}+o(1)
$$
where $o(1)\rightarrow0$ for $p\rightarrow2_{s}^{*},$ and the  conclusion  follows.
\end{proof}

The following ``global compactness'' result is an extension of the Struwe 
result (see Theorem
3.1 of \cite{Stw}) to the fractional case. Its proof can be found in \cite{PPNA15}; 
see also  \cite{BSY} for the non hilbertian case.

%and will be useful to study the behavior of the PS sequences
%for the limit functional $I_{*}(u)=\frac{1}{2}\|u\|^{2}-\frac{1}{2^{*}}|u|_{2^{*}}^{2^{*}}$. 

\begin{theorem}\label{St}
Let $\{v_{n}\}$ be a PS sequence for $I_{*}$ (defined in \eqref{eq:I*}) in $D^{s,2}_{0}(\Omega).$
Then there exist a number $k\in\mathbb{N}_{0}$, sequences of points
$\{x_{n}^{j}\} \subset\Omega$ and sequences of radii $\{ R^{j}_{n}\}$ ($1\le
j\le k$) with $R_{n}^{j}\rightarrow+\infty$ for $n\rightarrow+\infty$, there exist a  solution
$v\in D^{s,2}_{0}(\Omega)$ of (\ref{star}) and non trivial solutions
$\{v^{j}\}_{j=1,\ldots, k}\subset D^{s,2}(\mathbb{R}^{N} )$ of
\begin{equation}\label{limit}	
	(-\Delta )^{s}u=|u|^{2_{s}^{*}-2}\ \ \ \text{ in } \mathbb{R}^{N}\,,
\end{equation}
such that, a (relabeled) subsequence $\{v_{n}\}$ satisfies
\begin{eqnarray*}
&v_{n}-v-\sum_{j=1}^{k} v_{R_{n}}^{j}(\cdot-x_{n}^{j}) \rightarrow0 \ \ \text{
in }\ \ {D^{s,2}(\mathbb{R}^{N})}\,,& \\
&I_{*}(v_{n})\rightarrow I_{*}(v)+\sum_{j=1}^{k}\hat{I}(v^{j})&
\end{eqnarray*}
where $\hat{I}: D^{s,2}(\mathbb{R}^{N})\rightarrow\mathbb{R}$ is given by
$$
\hat{I}(u)=\frac{1}{2}\int_{\mathbb R^{2N}} \frac{|u(x) - u(y)|^{2}}{|x-y|^{N+2s}} dxdy-\frac{1}{2_{s}^{*}%
}\int_{\mathbb{R}^{N}}|u|^{2_{s}^{*}}\,dx\,.
$$
\end{theorem}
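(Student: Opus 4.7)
The plan is to adapt Struwe's classical global compactness argument to the nonlocal setting, following the strategy developed for the fractional Laplacian in \cite{PPNA15}. The overall scheme is: extract a weak limit, use a Brezis--Lieb type splitting to show the remainder is still PS (for a limit functional) and, if it does not go to zero strongly, isolate a concentrating bubble by rescaling, then iterate and conclude finiteness by an energy accounting.

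First I would establish that $\{v_n\}$ is bounded in $D^{s,2}_{0}(\Omega)$. Combining $I_{*}(v_n)\to c$ with $I_{*}'(v_n)[v_n]=\|v_n\|^{2}-|v_n|_{2_{s}^{*}}^{2_{s}^{*}}=o(\|v_n\|)$ yields $I_{*}(v_n)=(s/N)\|v_n\|^{2}+o(\|v_n\|)$, hence boundedness. Up to subsequences, $v_n\rightharpoonup v$ in $D^{s,2}_{0}(\Omega)$, $v_n\to v$ strongly in $L^{q}(\Omega)$ for $q<2_{s}^{*}$ and a.e.; passing to the limit in the weak formulation (the subcritical part is compact, the quadratic fractional form is continuous under weak convergence) shows that $v$ solves \eqref{star}.

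Next set $w_n:=v_n-v$. A fractional Brezis--Lieb type lemma for the Gagliardo seminorm and the standard Brezis--Lieb lemma for $L^{2_{s}^{*}}$ give
\begin{equation*}
[w_n]^{2}_{D^{s,2}(\mathbb R^{N})}=[v_n]^{2}_{D^{s,2}(\mathbb R^{N})}-[v]^{2}_{D^{s,2}(\mathbb R^{N})}+o(1),\quad |w_n|_{2_{s}^{*}}^{2_{s}^{*}}=|v_n|_{2_{s}^{*}}^{2_{s}^{*}}-|v|_{2_{s}^{*}}^{2_{s}^{*}}+o(1),
\end{equation*}
from which one checks that $\{w_n\}$ is a PS sequence for $\hat I$ at level $c-I_{*}(v)$. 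If $w_n\to 0$ strongly, we stop with $k=0$. Otherwise, a concentration-compactness analysis produces points $x_n^{1}\in\overline\Omega$ and scales $R_n^{1}>0$ such that a fixed portion of $|w_n|^{2_{s}^{*}}$ concentrates in $B_{1/R_n^{1}}(x_n^{1})$; rescaling by $\tilde w_n(y):=(R_n^{1})^{-(N-2s)/2}w_n\bigl(y/R_n^{1}+x_n^{1}\bigr)$ yields a bounded sequence in $D^{s,2}(\mathbb R^{N})$ converging weakly to a nontrivial solution $v^{1}$ of the limit equation \eqref{limit}. The divergence $R_n^{1}\to+\infty$ comes from the fact that $v$ has already absorbed the ``slow'' part together with the boundedness of $\Omega$: no bubble of fixed scale survives the extraction of $v$.

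I would then iterate. Defining $w_n^{2}:=w_n-v^{1}_{R_n^{1}}(\cdot-x_n^{1})$, a further Brezis--Lieb type splitting (now on $\mathbb R^{N}$, with the change-of-variable invariance of $[\cdot]_{D^{s,2}(\mathbb R^{N})}$ and $|\cdot|_{2_{s}^{*}}$ under the fractional conformal scaling) shows $\{w_n^{2}\}$ is a PS sequence for $\hat I$ with strictly smaller energy, and the extraction can be repeated. Each extracted bubble contributes at least $\frac{s}{N}S^{N/2s}$ to the energy, so boundedness of $I_{*}(v_n)$ forces the procedure to terminate after finitely many steps $k$. Summing the splittings yields the required asymptotic identity $I_{*}(v_n)\to I_{*}(v)+\sum_{j=1}^{k}\hat I(v^{j})$ and the strong $D^{s,2}(\mathbb R^{N})$ convergence of the remainder to zero.

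The main obstacle is the extraction step in a fractional, boundary-aware setting: one must prove that the rescaled sequences $\tilde w_n^{j}$ actually converge to solutions of \eqref{limit} on the \emph{whole} $\mathbb R^{N}$, which requires showing that the (rescaled) domains $R_n^{j}(\Omega-x_n^{j})$ exhaust $\mathbb R^{N}$ in a suitable sense. When $x_n^{j}$ approaches $\partial\Omega$ the rescaled domain tends to a half-space, and one has to rule out boundary concentration; this is typically done via a Pohozaev-type identity for $(-\Delta)^{s}$, which, together with the fact that \eqref{limit} has no nontrivial finite-energy solution on a half-space, excludes such bubbles. The nonlocality also complicates the Brezis--Lieb step, because the Gagliardo double integral couples regions on both sides of $\partial\Omega$; cutting off $v$ to control the cross terms in $[v_n-v]^{2}_{D^{s,2}(\mathbb R^{N})}$ is the delicate technical point, for which one relies on the tools already developed in \cite{PPNA15,BSY}.
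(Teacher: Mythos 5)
The paper does not actually prove Theorem \ref{St}: it is quoted from \cite{PPNA15} (see also \cite{BSY}), and your sketch follows exactly the Struwe-type scheme those references implement --- boundedness of the PS sequence, weak limit solving \eqref{star}, a nonlocal Brezis--Lieb splitting, bubble extraction by the fractional conformal rescaling, and termination via energy quantization at level $\frac{s}{N}S^{N/2s}$. Your identification of the two genuinely delicate points (controlling the cross terms of the Gagliardo double integral when subtracting $v$, and excluding half-space/boundary bubbles via a fractional Pohozaev-type nonexistence result) matches where the real work lies in those references, so the proposal is correct in outline and takes essentially the same route as the proof the paper relies on.
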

Roughly speaking, the solutions of \eqref{limit} are responsible for the lack of compactness.
%Let
% $$S_{s}:=\inf_{u\in D^{s,2}(\mathbb R^{N})} \left\{[u]_{D^{s,2}(\mathbb R^{N})} : |u|_{2_{s}^{*}} = 1\right\}$$

For what concerns $\hat{I}$, it is known (see \cite{CCB}) that it achieves its minimum on
functions of type
\begin{equation}\label{family}	
U_{R}(x-a) = C_{N,s}\left(\frac{R}{R^{2} +|x-a|^{2}} \right)^{\frac{N-2s}{2}}\quad R>0, \ a\in \mathbb R^{N}
%U_{R}(x-a)=\frac{(3 R^{2})^{1/4}}{(R^{2}+|x-a|^{2})^{1/2}}\ \ \ \ R>0\,,a\in \mathbb R^N
\end{equation}
and $C_{N,s}>0$ is a suitable constant.
The  minimum value is exactly 
$$\hat{I}(U_{R}(\cdot-a))=\frac{N}{s}\int_{\mathbb{R}^{N}}|U|^{2^{*}_{s}}dx=m_{*},$$ namely the infimum of $I_{*}$
defined in \eqref{eq:I*} on $\mathcal N_{*}$.
The value of $\hat{I}$ on solutions of \eqref{limit} which do not
belong to the family \eqref{family} 
(which are the unique positive solutions) is greater than $2m_*$
(see \cite[Lemma 2.10]{BSY} for details).
As a consequence, if the sequence $\{v_n\}$ of Theorem \ref{St} is a PS sequence for $I_*$ at level $m_*$,
we deduce  $I_*(v)=0, k=1$ and $v^{1}=U$. 
Moreover, being $v$  a solution of \eqref{star}
and being $I_*$  positive on the solutions,  it follows necessarily that $v=0$;
 so Theorem \ref{St} gives 
$$v_n-U_{R_n}(\cdot-x_n)\rightarrow 0\ \ \ \mbox{in} \ \ D^{s,2}(\mathbb R^N).$$
This ``decomposition'' of the Palais-Smale sequence will be fundamental in the next Section.

%
%
%\begin{proof}
%By \eqref{mm} and Lemma \ref{limitate} it is sufficient to prove that
%$$m_{*}\le\liminf_{p\rightarrow2_{s}^{*}}\tilde{m}_{p}\,.$$
%Let $t_{p}>0$ the unique value such that $t_{p} \tilde{u}_{p}\in
%\mathcal{N}_{*}$. Applying Lemma \ref{limsuptp} (with $\lambda=0$) we know
%$$\limsup_{p\rightarrow2^{*}}t_{p}\le1.$$
%Finally, using \eqref{tildebound} we derive
%\begin{align*}
%m_{*} &  \le I_{*}(t_{p}\tilde{u}_{p})=\left( \frac{1}{2}-\frac{1}{2^{*}
%}\right) t_{p}^{2}\|\tilde{u}_{p}\|^{2}\\
%&  = \tilde{I}_{p}(\tilde{u}_{p}) t_{p}^{2}+\left( \frac{1}{p}-\frac{1}{2^{*}
%}\right) \|\tilde{u}_{p}\|^{2}t_{p}^{2}\\
%&  = \tilde{m}_{p}\, t_{p}^{2}+o(1)
%\end{align*}
%where $o(1)\rightarrow0$ for $p\rightarrow2^{*}.$ Hence the conclusion  follows.
%\end{proof}

\section{Proof of Theorem \ref{th:th}}	\label{sec:prova}

%In this section we introduce the barycenter map that will
%allow us to compare the topology of $\Omega$ with the topology of suitable
%sublevels of $I_{p}\,;$ precisely sublevels with energy near the minimum level $m_p\,.$
%
From now on, let  $r>0$ be a radius sufficiently small such that $B_{r}\subset\Omega$ 
(recall $0\in \Omega$) and the sets
$$
\Omega^{+}_{r}=\{x\in\mathbb{R}^{3}:d(x,\Omega)\le r\}\,
$$
$$
\Omega^{-}_{r}=\{x\in\Omega:d(x,\partial\Omega)\ge r\}\,
$$
are homotopically equivalent to $\Omega $. Let also
\begin{equation}\label{h}
h:\Omega^{+}_r \rightarrow\Omega^-_r
\end{equation}
the homotopic equivalence map such that $h|_{\Omega^-_r}$ is the identity. 

%Let us  introduce the space $D^{s,2}(\mathbb{R}^{N})=\left\{u\in L^{2_{s}^{*}}
%(\mathbb{R}^{N}): [u]_{D^{s,2}(\mathbb R^{N})} <+\infty \right\}$ 
%(\textcolor{red}{already introduced...})
%which can also be characterized as the
%closure of $C^{\infty}_{0}(\mathbf{R}^{3})$ with respect to the (squared)
%norm
%$$\|u\|^{2}_{D^{1,2}(\mathbf{R}^{3})}=\int_{\mathbf{R}^{3}} |\nabla u|^{2}\,dx.$$
%Via the trivial extension, we have $D^{s,2}_{0}(\Omega)\subset D^{s,2}(\mathbb{R}^{N})$
%A function in  $H^{1}_{0}(\Omega)$ can be thought as an element of $D^{1,2}(\mathbf{R}^{3})$.

\medskip

\subsection{The barycenter map}
We are in a position now to prove the main result of this section.

%Thanks to the previous theorem we can prove that, roughly speaking, if $p$ is near the critical
%exponent $2_{s}^*$, the functions with barycenter outside $\Omega$ have an energy
%away from the ground state level $m_p$.

For $u\in D_{0}^{s,2}(\mathbb R^{N})$ with compact support, 
let us denote with the same symbol $u$ its
trivial extension out of supp\,$u$. The barycenter of $u$ %(see \cite{BC}) 
is defined as
\begin{equation*}
\beta(u) =\frac{\displaystyle\int_{\mathbb R^{N}} x |u|^{2_{s}^{*}}}{\displaystyle\int_{\mathbb R^{N} }|u|^{2_{s}^{*}}} .
%\frac{\int_{\mathbb R^{2N}} \frac{|u(x)-u(y)|^{2}}{|x-y|^{N+2s}} dxdy }{}
\end{equation*}
%\begin{equation*}
%\beta(u)=\dfrac{\int_{\mathbf{R}^{3}} x|\nabla u|^{2}\,dx}{\int_{\mathbf{R}
%^{3}}|\nabla u|^{2}dx}\,.
%\end{equation*}

We have the following.

\begin{proposition}\label{baricentri}
There exists $\varepsilon>0$ such that if $p\in(2_{s}^{*}-\varepsilon,2_{s}^{*}),$ we have
$$u\in\mathcal{N}_{p} \ \mbox{ and } \ I_{p}(u)<m_{p}+\varepsilon\,
\Longrightarrow\,\beta(u)\in\Omega^{+}_{r}.$$
\end{proposition}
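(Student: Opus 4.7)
The natural approach is proof by contradiction combined with a concentration argument. Suppose the statement fails: there exist $p_n \uparrow 2_s^*$ and $u_n \in \mathcal{N}_{p_n}$ with $I_{p_n}(u_n) < m_{p_n} + 1/n$ and $\beta(u_n) \notin \Omega_r^+$. From \eqref{eq:Ivinc} and Proposition \ref{limitmp}, the identity $\|u_n\|^2 = \frac{2p_n}{p_n - 2} I_{p_n}(u_n)$ shows $\{u_n\}$ is bounded in $D_0^{s,2}(\Omega)$; and the H\"older/Nehari argument of Remark \ref{rem}, applied to $u_n$ in place of $u_p$, yields $\{|u_n|_{2_s^*}\}$ bounded away from $0$.

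The plan is to rescale onto the limit Nehari manifold and apply Struwe's global compactness. Let $t_n>0$ be the unique number with $t_n u_n \in \mathcal{N}_*$. Lemma \ref{limsuptp} ensures $t_n$ is bounded away from $0$ and $\limsup t_n \le 1$, so up to a subsequence $t_n \to t_* \in (0,1]$. Computing $I_*(t_n u_n) = \tfrac{s}{N}\|t_n u_n\|^2 = t_n^2 \tfrac{s}{N}\cdot\tfrac{2p_n}{p_n-2}I_{p_n}(u_n)$ and using $\tfrac{s}{N}\cdot\tfrac{2p_n}{p_n-2}\to 1$ together with Proposition \ref{limitmp}, one finds $I_*(t_n u_n)\to t_*^2 m_*$; but $t_n u_n\in\mathcal{N}_*$ gives $I_*(t_n u_n)\ge m_*$, so $t_* = 1$ and $\{t_n u_n\}$ is a minimizing sequence for $I_*$ on $\mathcal{N}_*$. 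Since $\mathcal{N}_*$ is a natural constraint, Ekeland's variational principle produces a Palais--Smale sequence $\{v_n\}$ for the free functional $I_*$ at level $m_*$ with $\|v_n - t_n u_n\|\to 0$. At level $m_*$ Theorem \ref{St}, combined with the subsequent observation that any nontrivial weak limit $v$ would solve \eqref{star} with $I_*(v)>0$ and each additional bubble would cost at least $m_*$, forces the single-bubble configuration $v=0$, $k=1$, $v^1 = U$:
\[
v_n - U_{R_n}(\cdot - x_n) \to 0 \quad \text{in } D^{s,2}(\mathbb{R}^N),
\]
with $R_n\to\infty$ and $x_n\in\Omega$.

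It remains to track the barycenter. Because $\beta$ is invariant under positive rescaling, $\beta(u_n) = \beta(t_n u_n)$; and since $t_n u_n - v_n\to 0$ in $L^{2_s^*}$ with supports in the bounded set $\Omega$ and $\{|v_n|_{2_s^*}\}$ bounded below, we get $\beta(t_n u_n) - \beta(v_n)\to 0$. To show $\beta(v_n) - x_n\to 0$, I would split $\int(x - x_n)|v_n|^{2_s^*}dx$ over $B_\delta(x_n)$ and its complement: the first piece is bounded in modulus by $\delta\int|v_n|^{2_s^*}$, while the second tends to $0$ because the bubble's $L^{2_s^*}$ mass concentrates at $x_n$ as $R_n\to\infty$ (and $\rho_n := v_n - U_{R_n}(\cdot - x_n)\to 0$ in $L^{2_s^*}$) while $\operatorname{supp} v_n\subset\Omega$ is bounded. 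Letting first $n\to\infty$ and then $\delta\to 0$ yields $\beta(v_n)\to x_n$, hence $d(\beta(u_n),\Omega)\le|\beta(u_n) - x_n|\to 0 < r$, so $\beta(u_n)\in\Omega_r^+$ eventually, contradicting the working assumption. The main obstacle I expect is justifying the passage from a mere minimizing sequence on $\mathcal{N}_*$ to a genuine PS sequence (via Ekeland and the natural-constraint property) and pinning down the single-bubble profile at level $m_*$; once the Struwe decomposition is in hand, the barycenter estimate reduces to a routine concentration calculation.
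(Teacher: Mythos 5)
Your proposal is correct and follows essentially the same route as the paper: contradiction, projection onto $\mathcal{N}_*$ via Lemma \ref{limsuptp}, Ekeland plus the natural-constraint property to produce a free Palais--Smale sequence at level $m_*$, the single-bubble Struwe decomposition from Theorem \ref{St}, and the barycenter concentration estimate. The only (harmless) differences are that you pin down $t_*=1$ by a direct computation where the paper estimates $I_{p_n}(w_n)-I_*(t_nw_n)\ge o(1)$, and that you spell out the barycenter computation which the paper delegates to the reference \cite{Sicilia}.
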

\begin{proof}
We argue by contradiction. Assume that there exist sequences $\varepsilon
_{n}\rightarrow0,p_{n}\rightarrow2_{s}^{*}$ and $w_{n}\in\mathcal{N}_{p_{n}}$ such
that
\begin{equation}
\label{contradiction}I_{p_{n}}(w_{n})\le m_{p_{n}}+\varepsilon_{n}
\  \mbox{ and } \ \ \beta(w_{n})\notin\Omega^{+}_{r}.
\end{equation}
Then, by Proposition \ref{limitmp}
\begin{equation}\label{converg}	
	I_{p_{n}}(w_{n})\rightarrow m_{*}
\end{equation}
and $\{w_n\}$ is bounded in $D_{0}^{s,2}(\Omega)$. Moreover if  $|w_{n}|_{2^{*}_{s}}\to 0$ as $p_{n}\to2^{*}_{s}$
we will have  $I_{p_{n}}(w_{n}) = \frac{p_{n} -2}{2p_{n}}|w_{n}|_{2^{*}_{s}}^{2^{*}_{s}} \to 0$ which is a contradiction.
Then, we are in a position to apply   Lemma
\ref{limsuptp}: let $t_{n}>0$ be such that $t_{n} w_{n}\in\mathcal{N}_{*}$ and we suppose
$t_{n}\rightarrow t_{0}\in (0,1]$. We evaluate
\begin{align*}
I_{p_{n}}(w_{n})-I_{*}(t_{n} w_{n}) & =\left( \frac{1}{2}-\frac{1}{p_{n}
}\right) \|w_{n}\|^{2} 
%+\lambda\frac{p_{n}-4}{4p_{n}}\int_{\Omega}\phi_{u_{n}}u_{n}^{2}\,dx
-\left( \frac{1}{2}-\frac{1}{2_{s}^{*}}\right) t_{n}^{2}
\|w_{n}\|^{2}\\
%&  \ge\left( \frac{1}{2}-\frac{1}{p_{n}}\right) \|u_{n}\|^{2}-\left( \frac
%{1}{2}-\frac{1}{2_{s}^{*}}\right) t_{n}^{2} \|u_{n}\|^{2}\\
& = \left( \frac{1}{2}-\frac{1}{p_{n}}\right) \|w_{n}\|^{2}\left( 1-t_{n}
^{2}\right) -\left( \frac{1}{p_{n}}-\frac{1}{2_{s}^{*}}\right) t_{n}^{2}
\|w_{n}\|^{2}\\
&\geq o(1)
\end{align*}
which gives
$
m_{*}\le I_{*}(t_{n} w_{n})  \le I_{p_{n}}(w_{n})+o(1).
%\\%\left( \frac{1}{p_{n}}-\frac{1}{2^{*}}\right) t_{n}^{2} \|u_{n}\|^{2}\\
$
Then by \eqref{converg}, $I_{*}(t_{n} w_{n})\rightarrow m_{*}$ for $n\rightarrow+\infty.$
The Ekeland's variational principle implies that there exist $\{v_{n}\}\subset
\mathcal{N}_{*} $ and $\{\mu_{n}\}\subset\mathbb{R}$ such that
\begin{align*}
& \|t_{n} w_{n} -v_{n}\|\longrightarrow0\\
& I_{*}(v_{n})=\frac{s}{N}\|v_{n}\|^{2}\longrightarrow m_{*}\\
& I_{*}^{\prime}(v_{n})-\mu_{n} G_{*}^{\prime}(v_{n})\longrightarrow0\nonumber
\end{align*}
and Lemma \ref{lemmaPS}  ensures that $\{v_{n}\}$ is a PS sequence for the free functional
$I_{*}$ at level $m_*$.
By the considerations made  after Theorem \ref{St},
$$
v_{n} - U_{R_{n}}(\cdot-x_{n})\longrightarrow0\ \ \ \text{ in } D^{s,2}(\mathbb{R}^{N})\,,
$$
where $\{x_{n}\}\subset\Omega, R_{n}\rightarrow+\infty$ and we can write
$v_{n}=U_{R_{n}}(\cdot-x_{n})+\zeta_{n}$
with a remainder $\zeta_{n}$ such that $\|\zeta_{n}\|_{D^{s,2}(\mathbb{R}^{N}
)}\rightarrow0 $\,.
It is clear that $t_{n} w_{n}=v_{n}+ t_{n} w_{n}-\zeta_{n}$\,; so, denoting the
remainder again with $\zeta_{n}$, we have
\begin{equation*}\label{eq:tnun}
t_{n} w_{n}=U_{R_{n}}(\cdot-x_{n})+\zeta_{n}\,, \qquad \text{ with } \zeta_{n}\to 0 \ \text{ in } D^{s,2}(\mathbb R^{N}).
\end{equation*}
Adapting the arguments given in \cite[pags. 296-297]{Sicilia} to the present case, after straightforward computations one show that
$$\beta(t_{n}w_{n}) = \beta(w_{n}) = x_{n} +o(1).$$
Since $x_{n}\in \Omega$, we get a  contradiction with \eqref{contradiction}, and this concludes the proof. 
\end{proof}

\subsection{Conclusion of the proof of Theorem \ref{th:th}}\label{subsec:finale}
%Here we complete the  proof of our theorem but first we need a slight modi\-- fication to the
%previous notations.
In the following,
we  add a subscript $r$ ($r>0$ and small as before) to denote the same
quantities defined in the previous sections when the domain
$\Omega$ is replaced by $B_{r}$; namely integrals are taken on $B_{r}$ and norms are taken
for function spaces defined on the ball $B_{r}.$
So
$$
\mathcal{N}_{p,r}=\left\{ u\in D^{s,2}_{0}(B_{r}):\| u\|_{D^{s,2}_{0}(B_{r})}^{2}=|u|^{p}_{L^{p}(B_{r})}\right\}
$$
and, for $u\in\mathcal{N}_{p,r}$ let
$$
I_{p,r}(u)=\frac{p-2}{2p}\|u\|_{D^{s,2}_{0}(B_{r})}^{2}.$$

It is easy to see (for example by means of the Palais Principle of Symmetric Criticality)
 that $\inf_{\mathcal{N}_{p,r}} I_{p,r}$ is achieved on a radially symmetric function $u_{p,r}$, then we have
$$ m_{p,r}=\min_{\mathcal{N}_{p,r}} I_{p,r}=I_{p,r}(u_{p,r})\,.$$
Let
$$
I_{p}^{m_{p,r}}=\left\{ u\in\mathcal{N}_{p}: I_{p}(u)\le m_{p,r}\right\}
$$
be the sublevel of the funcional $I_{p}$ on the Nehari manifold (defined on the domain $\Omega$);
it is non vacuous since $m_{p}<m_{p,r}$.

For $p\in(2,2_{s}^*)$ define the map $\Psi_{p,r}:\Omega_{r}^{-}\rightarrow {\mathcal N}_{p}$ such that
\[
\Psi_{p,r}(y)(x)=
\left\{
\begin{array}
[c]{ccl}
u_{p,r}(\left\vert x-y\right\vert ) & \mbox{if} & x\in B_{r}(y)\\
0 & \mbox{if} & x\in\Omega\setminus B_{r}(y)
\end{array}
\right.
\]
and note that we have
\begin{equation*}
	\beta(\Psi_{p,r}(y))=y\ \ \ \mbox{and}\ \ \ \Psi_{r,p}(y)\in I_p^{m_{p,r}}\,.
\end{equation*}
By Proposition \ref{limitmp} we can write $m_p+k_p=m_{p,r}$ where $k_p>0$ and tends to zero if $p\rightarrow2_{s}^*$. Then
for $\varepsilon>0$ provided by Proposition \ref{baricentri}, there exists a $\bar{p}\in (2,2_{s}^*)$ such
that for every $p\in[\bar{p},2_{s}^*)$ it results $k_p<\varepsilon$, and if $u\in I_p^{m_{p,r}}$ we have
$$I_p(u)\le m_{p,r}<m_p+\varepsilon.$$
%(at least for $p$ near $2_{s}^*$).
Hence the following maps are well-defined (here $h$ is the same as in \eqref{h}):
$$\Omega^-_r\stackrel{\Psi_{p,r}}{\longrightarrow}I_p^{m_{p,r}}\stackrel{h\circ\beta}{\longrightarrow}\Omega_r^-.$$
and the composite map $ h\circ\beta\circ\Psi_{p,r}$ is homotopic to the identity of $\Omega^-_r$. So,  a well known property of the category, gives that
the sublevel $I_p^{m_{p,r}}$ ``dominates'' the set $\Omega_r^-$ in the sense that
$$\mbox{cat}_{I_p^{m_{p,r}}}(I_p^{m_{p,r}})\ge \mbox{cat}_{\Omega^-_r} (\Omega^-_r)$$
%=\mbox{cat}_{\bar{\Omega}}(\bar{\Omega}).$$
(see e.g. \cite{J}) and our choice of $r$ gives 
$\mbox{cat}_{\Omega^-_r} (\Omega^-_r)=\mbox{cat}_{\bar{\Omega}}(\bar{\Omega})$.

Summing up, we have found a sublevel of $I_p$ on $\cal N_p$ with category greater than 
$\mbox{cat}_{\bar{\Omega}}(\bar{\Omega})$. 
Since, as we have already said, the PS condition is verified on $\cal N_p$\,, applying the Lusternik-Schnirel\-mann 
theory we get
the existence of at least $\mbox{cat}_{\bar{\Omega}}(\bar{\Omega})$ critical points for $I_p$
on the manifold $\cal N_p$ which give rise to solutions of \eqref{eq:FBC}.

In case of a non contractible domain $\Omega$, the existence of another solution
can be obtained with the same arguments of \cite{BC2} (see also \cite{Sicilia}).
This is classical by now, but for completeness we recall the proof.
 
 By assumption and by the choice of $r$ it results $\mbox{cat}_{\Omega_{r}^{+}}\,(\Omega_{r}^{-})>1$, that is
$\Omega_{r}^{-}$ is not contractible in $\Omega_{r}^{+}$. 

If now
 the set ${\Psi_{p,r}(\Omega_{r}^{-})}$ were contractible in $I_{p}^{m_{p,r}}$, %(see \cite{BC2})
then $\mbox{cat}_{I_{p}^{m_{p,r}}}\,(\Psi_{p,r}(\Omega_{r}^{-}))=1$ and
this means that there exists a map $\mathcal H \in C([0,1]\times{\Psi_{p,r}(\Omega_{r}^{-})}; I_{p}^{m_{p,r}})$ satisfying
$${\mathcal H}(0,u)=u \ \ \forall u\in{\Psi_{p,r}(\Omega_{r}^{-})}\ \  \text{and}$$
$$\exists\, w\in I_{p}^{m_{p,r}}: {\mathcal H}(1,u)=w \ \ \forall u\in {\Psi_{p,r}(\Omega_{r}^{-})} .$$
Then $F=\Psi_{p,r}^{-1}({\Psi_{p,r}(\Omega_{r}^{-})})$ is closed, contains $\Omega_{r}^{-}$ and 
is contractible in $\Omega_{r}^{+}$ as we can see by defining the map 
\begin{equation*}
{\mathcal G}(t,x)=
\begin{cases}
   { \beta(\Psi_{r,p}(x))} & \text{if \ \ $0\le t\le 1/2$}, \\ 
    \beta ({\mathcal H}(2t-1, \Psi_{p,r}(x))) &\text{if  \ \ $1/2\le t\le1$}.
\end{cases}
\end{equation*}
%where ${j}$ is the inclusion map $\Omega_{r}^{-}\rightarrow\Omega_{r}^{+}.$
%homotopy equivalence between $\beta\circ\Psi_{p,r}$ and $h$. 
Then also $\Omega_{r}^{-}$ would be contractible in $\Omega_{r}^{+}$ giving a contraddiction.

On the other hand we can choose a function $z\in {\mathcal N}_{p}\setminus{\Psi_{p,r}(\Omega_{r}^{-})}$ so that the cone 
$$\mathcal C=\left\lbrace \theta z+(1-\theta) u : u\in {\Psi_{p,r}(\Omega_{r}^{-})}, \theta\in [0,1]\right\rbrace $$
is compact and contractible in $D^{s,2}_{0}(\Omega)$ and  $0\notin{\mathcal C}$. 
Denoting with $t_{u}$ the unique positive number provided by Lemma \ref{lemmanehari}, it follows that if we set
$$\hat{\mathcal C}:=\{t_{u}u : u\in \mathcal C\}, \ \ \ M_{p}:=\max_{\hat{\mathcal C}} I_{p}$$
then $\hat{\mathcal C}$ is contractible in $I_{p}^{M_{p}}$ and $M_p>m_{p,r}.$ As a consequence
also ${\Psi_{p,r}(\Omega_{r}^{-})}$ is contractible in $I_{p}^{M_{p}}.$ 

Resuming, the set $\Psi_{p,r}(\Omega_{r}^{-}) $ is contractible in $I_{p}^{M_{p}}$ and not in $I_{p}^{m_{p,r}}$. 
%It follows that the sublevel sets $I_{p}^{m_{p,r}}$ and $I_{p}^{M_{p}}$ have different topology.
Since the PS condition is satisfied we deduce the existence of another critical point
with critical level between $m_{p,r}$ and $M_{p}$.

\medskip

That the solutions we have found are positive, is a simple consequence of the fact that
 we can apply all the previous
machinery replacing the functional \eqref{eq:Ip} with
\begin{equation*}%\label{I+}
I_{p}(u)=\frac{1}{2}\int_{\mathbb R^{2N}}\frac{|u(x)-u(y)|^{2}}{|x-y|^{N+2s}} dx dy+\frac12\int_{\mathbb R^{N}}u^{2} -\frac1p\int_{\mathbb R^{N}}(u^{+})^{p}\,,
  \qquad u\in D_{0}^{s,2}(\Omega)
\end{equation*}
and then by the Maximum Principle we conclude.

\end{document}